\numberwithin{equation}{section}
\newtheorem{THM}{Theorem}
\newtheorem{thm}{Theorem}[section]
\newtheorem{step}{Step}
\newtheorem{lem}[thm]{Lemma}
\newtheorem{corol}[thm]{Corollary}
\newtheorem*{corol*}{Corollary}
\newtheorem{prop}[thm]{Proposition}
\newtheorem{conj}{Conjecture}
\newtheorem{prob}{Problem}
\newtheorem*{thm*}{Theorem}
\newtheorem*{cnj*}{Conjecture}
\theoremstyle{definition}
\newtheorem{rmk}[thm]{Remark}
\newtheorem{eg}[thm]{Example}
\newtheorem{dfn}[thm]{Definition}
\newtheorem*{akn}{Acknowledgements}
\newtheorem*{conj*}{Conjecture}
\newcommand{\cA}{\mathcal{A}}
\newcommand{\cB}{\mathcal{B}}
\newcommand{\cD}{\mathcal{D}}
\newcommand{\cH}{\mathcal{H}}
\newcommand{\cE}{\mathcal{E}}
\newcommand{\cL}{\mathcal{L}}
\newcommand{\cR}{\mathcal{R}}
\newcommand{\cF}{\mathcal{F}}
\newcommand{\cM}{\mathcal{M}}
\newcommand{\cG}{\mathcal{G}}
\newcommand{\cV}{\mathcal{V}}
\newcommand{\cT}{\mathcal{T}}
\newcommand{\cN}{\mathcal{N}}
\newcommand{\cS}{\mathcal{S}}
\newcommand{\cI}{\mathcal{I}}
\newcommand{\cU}{\mathcal{U}}
\newcommand{\cO}{\mathcal{O}}
\newcommand{\cHom}{\mathcal{H}om}
\newcommand{\cExt}{\mathcal{E}xt}
\DeclareMathOperator{\cha}{\mathrm{char}}
\DeclareMathOperator{\cok}{coker}
\DeclareMathOperator{\gr}{gr}
\DeclareMathOperator{\pp}{\mathrm{p}}
\DeclareMathOperator{\id}{\mathrm{id}}
\DeclareMathOperator{\rk}{rk}
\DeclareMathOperator{\Ext}{Ext}
\DeclareMathOperator{\Tor}{Tor}
\DeclareMathOperator{\Hom}{Hom}
\DeclareMathOperator{\End}{End}
\DeclareMathOperator{\im}{Im}
\DeclareMathOperator{\Hilb}{Hilb}
\DeclareMathOperator{\HH}{H}
\DeclareMathOperator{\grade}{grade}
\newcommand{\ZZ}{\mathbb Z}
\newcommand{\QQ}{\mathbb Q}
\newcommand{\NN}{\mathbb N}
\newcommand{\PP}{\mathbb P}
\newcommand{\sx}{{\sf x}}
\newcommand{\bk}{{\boldsymbol k}}
\newcommand{\bM}{\boldsymbol M}
\newcommand{\Proj}{\mathrm{Proj}}
\newcommand{\Mod}{\mathbf{Mod}}
\newcommand{\FMod}{\mathbf{FMod}}
\newcommand{\MCM}{\mathbf{MCM}}
\newcommand{\SMCM}{\underline{\mathbf{MCM}}}
\newcommand{\SHom}{\underline{\mathrm{Hom}}}
\newcommand{\SOm}{\underline{\boldsymbol{\Omega}}}
\newcommand{\Om}{\boldsymbol{\Omega}}
\newcommand{\Rep}{\mathbf{Rep}}
\newcommand{\Ulr}{\mathbf{Ulr}}
\newcommand{\Coh}{\mathbf{Coh}}
\newcommand{\ACM}{\mathbf{ACM}}
\newcommand{\bD}{\mathbf{D}^b}
\newcommand{\bR}{\mathbf{R}}
\DeclareMathOperator{\ts}{\otimes}
\newcommand{\mono}{\hookrightarrow}
\newcommand{\epi}{\twoheadrightarrow}
\newcommand{\la}{\leftarrow}
\newcommand{\Sing}{\mathrm{Sing}}
\newcommand{\lx}{\xleftarrow}
\newcommand{\rx}{\xrightarrow}
\newcommand{\APic}{\mathrm{APic}}
\title{The Cohen-Macaulay representation type of projective arithmetically Cohen-Macaulay varieties}
\author{Daniele Faenzi}
\email{daniele.faenzi@u-bourgogne.fr}
\address{Institut de Mathématiques de Bourgogne,
UMR CNRS 5584,
Université de Bourgogne et Franche-Comté,
9 Avenue Alain Savary,
BP 47870,
21078 Dijon Cedex,
France}
\author{Joan Pons-Llopis}
\email{juan.ponsllopis@polito.it}
\address{Dipartimento di Scienze Matematiche,
Politecnico di Torino,
Corso Duca degli Abruzzi 24,
10129 Torino, Italy}
\begin{document}



\maketitle

\begin{prelims}

\DisplayAbstractInEnglish

\bigskip

\DisplayKeyWords

\medskip

\DisplayMSCclass

\bigskip

\languagesection{Fran\c{c}ais}

\bigskip

\DisplayTitleInFrench

\medskip

\DisplayAbstractInFrench

\end{prelims}


\newpage

\setcounter{tocdepth}{1}

\tableofcontents


\section{Introduction}

A classical result in representation theory of quivers is Gabriel's
theorem, stating that a finite connected quiver supports only finitely many irreducible
representations (that is, indecomposable modules over the
associated path algebra) if and only if it is of type $A$, $D$, $E$. The
classification of tame quivers as {\it Euclidean graphs}, or {\it
  extended Dynkin 
diagrams}, of type $\tilde A$,
$\tilde D$, $\tilde E$ came shortly afterwards. Remarkably, any other finite
connected quiver
supports arbitrarily large
families of indecomposable representations, which is to say, it is of {\it wild representation type}.

In algebraic geometry and commutative algebra, the relevant problem in
terms of representation theory of algebras concerns the complexity of the
category of maximal Cohen-Macaulay modules over the coordinate ring
$\bk[X]$ of a closed $m$-dimensional subvariety $X \subset \PP^n$ over a field
$\bk$. For $m>0$, assuming $\bk[X]$ to be Cohen-Macaulay (so $X$
is said to be arithmetically Cohen-Macaulay, briefly ACM), these
 correspond to ACM sheaves, namely coherent sheaves  $\cE$ on $X$ without intermediate
  cohomology, that is, satisfying $\HH^i(X,\cE(t))=0$ for all $t \in \ZZ$ and $0 < i < m$.
For hypersurfaces \cite{eisenbud:homological} these modules correspond to matrix
factorizations, which in turn are related to mirror symmetry,
see \cite{orlov:landau-ginzburg}.

In this sense, reduced projective ACM varieties of finite CM-type are
classified, see \cite{eisenbud-herzog:CM}, see also
\cite{buchweitz-greuel-schreyer,auslander-reiten:almost-split,knorrer:ACM,herzog:ringe-mit}.
Their list  (for positive dimension) consists of rational normal
curves, projective
spaces, smooth quadrics, the Veronese surface in $\PP^5$ and the
cubic scroll in $\PP^4$.
Of course this result is connected with Horrocks' and Grothendieck's classical splitting theorems
for vector bundles over $\PP^n$, which in turn relates to ideas
going back to Segre, \cite{segre:rigate-qualunque}. 

The next class consists of CM-tame varieties. These include
CM-countable varieties (for example quadrics of corank one) and varieties
where all indecomposable ACM sheaves are parametrized by a curve.
Besides
smooth elliptic curves (by seminal work of Atiyah,
\cite{atiyah:elliptic}, also related to classical work of Segre, \emph{cf.}
\cite{segre:rigate-ellittiche}), trees and cycles of rational curves
(see  \cite{drozd-greuel:CM-type,drozd-greuel}, see also
\cite{burban-drozd:coherent}), two sporadic examples were given in
\cite{faenzi-malaspina}, consisting of smooth  quartic surface scrolls
in $\PP^5$. 

The main goal of this paper is to prove that, besides theses cases, all
reduced closed ACM subschemes $X\subset \PP^n$ of positive dimension are
CM-wild. Without loss of generality, we may assume that $X$ is
non-degenerate, namely $X$ is
contained in no hyperplane.

\begin{thm*}
  Let $X\subset \PP^n$ be a reduced non-degenerate  closed ACM subscheme of dimension $m \ge
  1$.   Then $X$ is of wild CM-type unless $X$ is one of the following:
\begin{enumerate}[label=\rm{(\roman*)}]
  \item \label{linear} a linear space;
  \item \label{quadric} a quadric hypersurface of corank at most one;
  \item \label{rnc} a tree of rational
    curves;
  \item \label{elliptic} a smooth elliptic curve or a cycle of rational
    curves;
  \item \label{scroll} a smooth rational scroll of dimension $2$ and degree $d =
    3$ or $d=4$;
  \item \label{veronese} the Veronese surface in $\PP^5$.
  \end{enumerate}
\end{thm*}

As for the terminology used here, a \textit{rational scroll} is a variety
obtained as the image in 
$\PP^n$ of the projective bundle $Y=\PP(\oplus_{i=1}^m
\cO_{\PP^1}(a_i))$, for some integers $0\le a_1\le \cdots \le a_m \neq 0$, by the
relatively ample line bundle $\cO_Y(1)$. A rational scroll is smooth
if and only if $a_1>0$ or $a_{m-1}<a_m=1$ (in which case $Y=\PP^m$), otherwise it is a cone, see \S
\ref{extensions-cones}. As we will recall in a minute, rational
scrolls and quadrics form the class of varieties of \textit{minimal
  degree}, that play a rather special role in representation theory of algebras.

A \textit{tree of rational curves} is the union of distinct smooth rational curves
$X_1,\ldots,X_s$, namely each $X_i$ is isomorphic to $\PP^1$, such
that $X_i \cap X_j$
is a single point if $j \in \{i-1,i+1\}$ and empty otherwise.
A \textit{cycle of rational curves} can be either the same thing, but using cyclic
notation on the indices (so $X_1 \cap X_s \ne \emptyset$) or an
irreducible rational curve with a single ordinary double point.
This means
that the only singularities of the whole scheme $X$ are ordinary double points, so the
intersections points $X_{i-1} \cap X_i$ and $X_{i+1} \cap X_i$ are distinct.

\medskip

A word on the base field $\bk$ is in order. The result holds for an
algebraically closed field of arbitrary characteristic except
$2$. Actually all the results that we prove in this paper are valid
also in characteristic $2$. The only point where $\cha(\bk)\ne 2$ is
needed is when we recall the fact, due to Knörrer and Buchweitz-Greuel-Schreyer, that quadric cones of corank $1$ are
CM-countable. We refer to Remark \ref{char2} for a discussion of this
issue.

As a consequence of our main result, we get a strong version of the
finite-tame-wild trichotomy,
  namely that any reduced ACM closed subscheme $X \subset \PP^n$ of
  dimension $m>0$ falls in exactly one of the following
  classes:
  \begin{description}
  \item[\it Finite] there are only finitely many
    indecomposable ACM sheaves on $X$ up to isomorphism and degree shift. This
    happens in cases \ref{linear}, \ref{veronese},
    \ref{scroll} for $d=3$ and the smooth cases of \ref{quadric}, \ref{rnc}.
  \item[\it Tame] in turn also classically divided into \emph{tame discrete}: the parameter space of
    indecomposable non-isomorphic ACM sheaves is a countable set of points (in the singular cases of \ref{quadric}, \ref{rnc});
    or \emph{properly tame}: for any given rank $r$, the parameter space of
    indecomposable non-isomorphic ACM sheaves of rank $r$ is a finite union of curves (in cases
    \ref{scroll} for $d=4$ and \ref{elliptic}).
  \item[\it Wild] the category of modules of any finite-dimensional
    algebra admits a representation embedding into the category of
    MCM  $\bk[X]$-modules; in particular  $X$ supports families of
  arbitrarily large dimension of indecomposable non-isomorphic ACM sheaves.
\end{description}

The result was known for some specific cases,
such as smooth cubic surfaces (see
\cite{casanellas-hartshorne:ACM-cubic}), all linearly embedded Segre varieties besides
the CM finite ones (see \cite{costa-miro_roig-pons_llopis}),
smooth del Pezzo surfaces (see  \cite{miro-roig-pons-llopis:del-pezzo,
  coskun-kulkarni-mustopa:ulrich}), positive-dimensional hypersurfaces of degree at least
$4$ and some complete intersections (see \cite{drozd-tovpyha:JPAA}),
some Fano varieties (see \cite{miro-roig-pons-llopis:fano}), the triple
Veronese embedding of any variety (see \cite{miro-roig:representation-type:PAMS}).

\medskip

One should expect that non-projective varieties may behave
differently  (see
\cite{leuschke-wiegand:cohen-macaulay} for a detailed picture, see
also \cite{stone.non-gorensteint-countable}).
More varieties of tame type appear from germs of elliptic
singularities, see \cite{kahn:reflexive} or from non-isolated
affine surface singularities, see also \cite{burban-drozd:non-isolated}.

\medskip
Let us indicate the strategy
of our proof.
The first step is to
isolate some datum in order to build large families of
indecomposable non-isomorphic MCM modules on $\bk[X]$.
This will be accomplished by Theorem
\ref{strictly wild by extensions}, where we establish that this
datum should be a pair of ACM sheaves $\cA$ and $\cB$ on $X$, whose
only endomorphisms are homotheties (that is, $\cA$ and $\cB$ are
\textit{simple}), satisfying some semistability condition or a mutual
orthogonality condition $\Hom_X(\cB,\cA)=\Hom_X(\cA,\cB)=0$, and such that
$\Ext^1_X(\cB,\cA)$ is sufficiently large, namely of dimension $w \ge 3$.
From this datum we construct a representation embedding from
the category $\Rep_\Upsilon$
of finite-dimensional representations of the Kronecker quiver
$\Upsilon = \Upsilon_w$ with $w$ arrows to the
category $\MCM_{\bk[X]}$ of MCM modules over $\bk[X]$. This quiver should be seen
as parametrizing ACM sheaves appearing as extension of copies of $\cA$
and $\cB$.
In turn, by a standard argument, the existence of this embedding
suffices to prove CM-wildness
of $X$.
This procedure is not completely new, see \cite{drozd-greuel}, but we
endow it with a quite more general flavour.

The next step is to actually construct the sheaves $\cA$ and $\cB$.
This turns out to be quite complicated to achieve by working directly on $X$ in
general.
For this we need our next result,
Theorem \ref{stable FF}, which shows how to deduce
CM-wildness of $X$ from CM-wildness of $Y$ when $Y$ is a linear
section of $X$ of codimension $c>0$, except when $X$ has \textit{minimal degree}
that is $\deg(X)=n-m+1$, or equivalently when the sectional genus $p_X$ of $X$
is $0$.

In order to do this, we need to further assume that $\cA$ and
$\cB$ are Ulrich sheaves, namely their modules of global sections have
the maximal number of generators.
We see this as a further indication of the importance of these sheaves,
see \cite{eisenbud-schreyer-weyman,eisenbud-schreyer:betti-cohomology}.

The idea of Theorem \ref{stable FF} is that taking the $c^{\mathrm{th}}$ syzygy $\Om^c_{\bk[X]}$ of the
$\bk[X]$-module of an MCM module $L$ over $\bk[Y]$
one obtains an MCM module over $\bk[X]$ and that
this entails no essential loss of information if $L$ is Ulrich and $p_X>0$.
In fact,
for $\Om_{\bk[X]}^c$ to be a functor we need to pass to the stable category
$\SMCM_{\bk[X]}$ where we quotient out by morphisms factoring through
a free module. The point is that the stable syzygy
functor $\SOm^c$ is fully faithful on Ulrich modules when $p_X>0$.
The proof  uses cohomology
vanishing of Ulrich sheaves combined with duality.

The next result, Theorem \ref{embedding}, shows how to put these two
ingredients together. Indeed, by
resolving over $\bk[X]$ the module of global sections of the universal extension
of the sheaves $\cB$ by $\cA$ over $Y$ needed for Theorem \ref{strictly wild by extensions} and taking
its $c^\mathrm{th}$ syzygy, we get a functor $\Rep_\Upsilon \to \MCM_X$ whose
stabilization is fully faithful by Theorem \ref{stable FF}.
Then, although the functor itself is not quite fully faithful,
nevertheless it is a {\it representation embedding}, that is, it sends
non-isomorphic (respectively, irreducible) representations to non-isomorphic
(respectively, indecomposable) modules, and this suffices to show CM-wildness
of $X$.

In view of these results, in order to complete the proof it remains to treat directly the case $p_X=0$, and to construct the Ulrich sheaves $\cA$ and
$\cB$ as above over
a linear section $Y$ of $X$, which we take to be of dimension $1$
when $p_X\ge 2$, or of dimension
$2$ for $p_X = 1$.

The case $p_X \ge 2$ is rather easily seen to provide only CM-wild
varieties, as $\cA$ and $\cB$ can be taken to be sufficiently general
bundles of rank $2$ over $Y$ of slope $\deg(Y)+g-1$, where $g$ is the
geometric genus of $Y$. This is treated in \S
\ref{higher degree}.

For $p_X=1$ our proof of the existence of $\cA$ and $\cB$ is based on a  study of
locally free Ulrich sheaves of rank $2$ on surfaces of sectional genus
$1$, also called of \textit{almost minimal
degree}. Special care has to be taken to allow $Y$ to be singular and even
non-normal (yet neither reducible nor a cone, see the next paragraphs); nevertheless these varieties are completely classified and
sufficiently detailed information is available, in particular on their divisor class
group, to construct the required sheaves and to control their deformations. Theorem \ref{resumealmost} gathers the results of \S \ref{section almost minimal},
devoted to this case.

A different method is needed for reducible or non-reduced subschemes, since our
basic technique to construct the
sheaves $\cA$ and $\cB$ may fail for
various reasons. The two major ones are the following: first,
 the sheaves
$\cA$ and $\cB$ may
degenerate to non-simple ones when more components appear. Second, we
partially rely on the classification of varieties of
minimal and almost minimal degree, and for reducible subschemes this
is a far more complicated question than for irreducible ones.
We deal with this in Theorems \ref{two-pieces} and \ref{non-reduced}.

To summarize again, it  remains to work out the case of minimal
degree, which is equivalent to $p_X=0$.
This is easy if $X$ is smooth, but needs some care
if $X$ is singular, or equivalently if $X$ is a cone, which is to say,
the generators of the homogeneous ideal of $X$ do not 
involve a given set of variables (see \S \ref{extensions-cones}).
This case is not quite straightforward, mainly because
some smooth finite CM-type varieties
degenerate to singular ones that turn out to be CM-wild.
In \S \ref{minimal} we describe a
method to deal with cones and varieties of minimal
degree in a uniform manner.
This proves the stated CM-wildness of all cones except for a single
exceptional variety, namely
 the cone over a rational normal cubic. In turn, in Theorem
\ref{funnycase} we treat this
intriguing case with an
ad-hoc method based on representations of a certain quiver with three
vertices.

\medskip

If one carefully goes through the constructions carried out in this paper, it can be observed that many CM-wild varieties  actually support unbounded
families of Ulrich sheaves. A strong conjecture in this sense would be the following.

\begin{conj}
  Let $X \subset \PP^n$ be a closed non-degenerate integral subscheme of 
  dimension $m \geq 2$, not of minimal degree. Then $X$ is strictly Ulrich wild.
\end{conj}

Our main theorem offers an affirmative answer in case $X$ is ACM,
after replacing ``strictly Ulrich wild'' by ``CM-wild''.
The conjecture is known to hold for several classes of varieties,
most notably of surfaces, like del Pezzo surfaces or K3 surfaces
(see Theorem \ref{resumealmost} for surfaces of almost minimal degree,
which coincide with del Pezzo surfaces in the smooth case).
It is also true that curves of arithmetic genus greater or equal than two (see Section \ref{higher degree}) and smooth varieties of minimal degree of dimension $m
\ge 2$ are strictly
Ulrich wild except in cases \ref{linear}, \ref{quadric}, \ref{scroll}, \ref{veronese}
of our main theorem.
However this fails in general for singular varieties of minimal degree. For example, 
consider a quadric cone $X \subset \PP^n$ over a vertex $\Lambda$ of dimension
at least $1$. Then  $X$ is CM-wild.
On the other hand, an Ulrich sheaf on $X$ is the sheafification of
$E^0\otimes \bk[\Lambda]$, where $E^0$ is the module associated with a
direct sum of spinor bundles on a smooth quadric $X^0$,
the base of the cone. These sheaves are rigid, so $X$ is not
Ulrich-wild.

In a sense, the statement of the previous conjecture admits no converse, as it turns out that the Segre variety $\PP^1\times\PP^2\subset\PP^5$
is a CM-wild ACM variety whose only infinite family of
non-isomorphic indecomposable ACM sheaves (up a degree shift) consists of Ulrich bundles. We refer to \cite{faenzi-malaspina-sanna:non-ulrich} for this and related issues.

One may also observe that many singular CM-wild
varieties admit unbounded families of non-isomorphic ACM sheaves of
fixed rank, while this does seem not to happen for smooth varieties.
This motivates the following question.

\begin{prob}
 Let  $X \subset \PP^n$ be a smooth projective variety of positive
 dimension. Given $r \ge 1$, is the family of isomorphism classes of
 indecomposable ACM  initialized sheaves of rank $r$  parametrized by a finite union of
 irreducible quasi-projective schemes?
\end{prob}

The problem of classifying the representation type of integral subschemes
$X\subset \PP^n$ of dimension $m\ge 2$ which
are not ACM seems interesting. Some cases are known, such as abelian and
Enriques surfaces, (see \cite{casnati-ulrich} and \cite{beauv-abelian-ulrich}) but the general problem remains
wide open even for smooth surfaces. For reducible
subschemes, already the representation type of 2-regular subschemes
seems to be unknown in general.

\begin{akn}
  The second named author would like to thank the University of Bourgogne for the hospitality during the stay when part of this work was done.
\end{akn}

\section{CM-wild varieties}

Let $\bk$ be a field and set
$S=\bk[x_0,\ldots,x_n]$ for the symmetric graded $\bk$-algebra with $n+1$
indeterminates, seen also as the coordinate ring of the projective $n$-space
$\PP^n=\Proj(S)$ of $1$-dimensional linear quotients of the vector
space $V=\bk^{n+1}$.

\subsection{ACM varieties and modules}

We first recall some basic terminology for various Cohen-Macaulay
properties of varieties, sheaves and modules.

\subsubsection{ACM subschemes}

Let $X \subset \PP^n$ be a closed subscheme of dimension $m>0$.
Write $I_X$ for the saturated homogeneous ideal of $X$ and $R=\bk[X]=S/I_X$ for its coordinate ring.

\begin{dfn}
 The subscheme $X \subset \PP^n$ is
  arithmetically Cohen-Macaulay (ACM) if $R=\bk[X]$ is a graded
  Cohen-Macaulay ring,
  namely if $R$ has a graded $S$-free
  resolution of length $n-m$.
\end{dfn}

\subsubsection{Terminology on coherent sheaves}

Let $X \subset \PP^n$ be a closed subscheme, $m=\dim(X)$.
We write $\Coh_X$ for the category of coherent sheaves on $X$.
We denote by $\cO_X(1)$ the restriction to $X$ of
$\cO_{\PP^n}(1)$ and we employ the usual notation $\cE(t)=\cE \otimes
\cO_X(1)^{\otimes t}$.
The ideal sheaf of a subscheme $Z \subset X$ will be denoted by $\cI_{Z|X}$.
Given  $\cE,\cF$ in $\Coh_X$ and $i \in \ZZ$, we consider the Ext modules:
\[
\Ext^i_{X}(\cE,\cF)_*= \bigoplus_{t \in \ZZ}
\Ext^i_{X}(\cE,\cF(t))
\]
as
$R$-modules. For $i\in \NN$, we write
$\HH^i_*(\cE)=\Ext^i_X(\cO_X,\cE)_*$ for the $i^\mathrm{th}$ cohomology module of
$\cE$. One may also replace $t\in \ZZ$ by any truncation $t \ge t_0$.
The module $\HH^0_*(\cE)$ is also denoted by $\Gamma_*(\cE)$.
It is finitely generated if $\cE$ has no zero-dimensional
subsheaf.

\medskip

We say that a coherent sheaf $\cE$ on $X$ is {\it simple} if its
only endomorphisms are homotheties, that is, if $\Hom_X(\cE,\cE) =
\bk \id_\cE$. We write $\chi(\cE,\cF)$ for the Euler characteristic of two coherent sheaves $\cE$
and $\cF$ over $X$, namely
$$
\chi(\cE,\cF)=\sum_{i \in \ZZ}(-1)^i\dim_\bk\Ext^i_X(\cE,\cF),
$$
provided this is a finite sum. This is the case for instance when $X$ is smooth or when $\cE$ or $\cF$ are locally
free. We abbreviate $\chi(\cF)=\chi(\cO_X,\cF)$.

\medskip

We write $H_X$ for the very ample divisor class on $X$ associated with $\cO_X(1)$.
The {\it Hilbert polynomial} of a coherent sheaf $\cE$ is defined as
$P(\cE,t):=\chi(\cE(t))$. The degree $d=\deg(X)$ is defined in terms
of the polynomial $P(\cO_X,t)$, namely by the condition that the
leading term of $P(\cO_X,t)$ be $d/m!$. Similarly, for $\cE$ in
$\Coh_X$, the rank $r=\rk(\cE) \in \QQ$ is defined by the condition
that the leading term of $P(\cE,t)$ be $r d/m!$.
We write $\pp(\cE,t):=P(\cE,t)/r$ for the {\it reduced
  Hilbert polynomial}.

We write $p \succeq q$ (resp. $p \succ q$) for polynomials $p,q \in \QQ[t]$ if $p(t) \ge
q(t)$ (resp. $p(t) > q(t)$) for $t \gg 0$. A coherent sheaf $\cE$ is
called {\it pure} if all of its subsheaves are supported in
 dimension $m$. A pure sheaf is $H_X$-{\it
  semistable} in the sense of Gieseker-Maruyama if, for any coherent
subsheaf $\cF \subsetneq \cE$, one has $\pp(\cE,t)\succeq \pp(\cF,t)$. The sheaf is called
$H_X$-stable if for all $\cF$ as
above $\pp(\cE,t) \succ \pp(\cF,t)$. We will often suppress $H_X$ from the notation.

\subsubsection{Cohen-Macaulay and Ulrich conditions}
Again $X \subset \PP^n$ is a closed subscheme of dimension $m \ge 1$ with
coordinate ring $R=\bk[X]$. Given a graded $R$-module $M$ and
$t\in\ZZ$, we denote by $M_t$ its degree-$t$ piece and $M_{\ge
  t}=\oplus_{i\ge t}M_i$. Analogously $M_{< t}=\oplus_{i < t}M_i$.

\begin{dfn}
  A coherent sheaf $\cE$ on an $m$-dimensional closed subscheme
  $X\subset \PP^n$ is
  called \emph{arithmetically Cohen-Macaulay (ACM)} if $\cE$ is
  locally Cohen-Macaulay (that is, $\cE_x$ is a Cohen-Macaulay
  $\cO_{X,x}$-module for all $x\in X$)  and $\HH^i_*(\cE)=0$ for
  $i=1,\dots,m-1$.
\end{dfn}

This is equivalent to asking that $E=\Gamma_*(\cE)$ is
a maximal graded Cohen-Macaulay module over $R$. In turn, this amounts
to requiring that $E$ has a graded free $S$-resolution of length $n-m$.

Let $d=\deg(X)$. Given an MCM module $E$ of rank $r$ over $R=\bk[X]$, the number of independent minimal generators of
$E$ is at most $d r$. Analogously, for an ACM sheaf $\cE$, assuming that $\cE$ is {\it initialized}  (\textit{i.e.},  $\HH^0(X,\cE)>\HH^0(X,\cE(-1))=0$), we have:
\begin{equation}
  \label{atmost}
\dim_{\bk}\HH^0(X,\cE) \le d r.
\end{equation}

An ACM coherent sheaf $\cE$ on $X$ is called an {\it Ulrich sheaf} on $X$ (and
$E=\Gamma_*(\cE)$ is called an Ulrich module over $R$) if $\HH^0(X,\cE(-1))=0$ and equality is attained in \eqref{atmost}.
The reader can consult \cite{eisenbud-schreyer-weyman} for an account
on Ulrich sheaves. Let us just gather here the main properties that will be used throughout this paper:
\begin{enumerate}[label=\alph*)]
  \item Any Ulrich sheaf $\cE$ of rank $r$ on an $m$-dimensional closed subscheme
  $X\subset \PP^n$ of degree $d$ has a linear $\cO_{\PP^n}$-resolution of the form
  $$
  0 \la \cE \la \cO_{\PP^n}^{d r} \lx{d_1} \cO_{\PP^n}(-1)^{a_1} \la
  \cdots\lx{d_{n-m}} \cO_{\PP^n}(m-n)^{a_{n-m}}\la 0.
  $$
  The length of the resolution is $n-m$ and the maps $(d_i\mid i\in
  \{1,\ldots,n-m\})$  are given
  by matrices whose entries are linear forms of
  $S$. Also one has $a_i=\binom{n-m}{i}d r$ for all  $i \in \{1,\ldots,n-m\}$
  This follows from \cite[Proposition 2.1]{eisenbud-schreyer-weyman},
  see also the comments after this proposition.

  \item Any Ulrich sheaf $\cE$ is globally generated. Its Hilbert
    polynomial is $P(\cE,t)=d r\binom{t+m}{m}$. This is a consequence
    of the previous point.
  \item For any linear projection $\pi:X\rightarrow \PP^m$, the direct
    image $\pi_*\cE$ is isomorphic to $\cO_{\PP^m}^{d r}$. Again \cite[Proposition 2.1]{eisenbud-schreyer-weyman}.
  \item Any $\cE$ Ulrich sheaf on a subscheme  $X \subset \PP^n$ is
    semistable and any destabilizing  subsheaf of $\cE$ is also
    Ulrich.
    This has been proved in a number of papers, with variable hypothesis; see for instance
    \cite[Theorem 2.9]{casanellas-hartshorne-geiss-schreyer} for smooth varieties. We refer to Lemma \ref{ulrich
      semistable} for a statement on an arbitrary closed subscheme.
\end{enumerate}

Let us denote by $\ACM_X$ (resp. $\Ulr_X$) the full subcategory of $\Coh_X$ consisting
of ACM sheaves (resp. of Ulrich sheaves). We denote by $\MCM_{R,0}$
(resp. $\Ulr_{R,0}$) the subcategory of the category $\Mod_R$ of finitely
generated $R$-modules whose objects are MCM
modules (resp. Ulrich
modules) and whose morphisms are degree-$0$ morphisms of
$R$-modules. There is a basic equivalence between these notions as in the next
lemma (see \cite[Proposition 2.2.4]{kleiman-landolfi}).

\begin{lem}\label{equiv-mcm-acm}
  The functor $\Gamma_* : \ACM_X \to \MCM_{R,0}$ is an equivalence, whose
  inverse is the sheafification functor $M \mapsto \tilde M$. The
  equivalence carries $\Ulr_X$ to $\Ulr_{R,0}$.
\end{lem}

Let $\cE$ and $\cF$ be coherent sheaves on $X$ whose
associated modules $E=\Gamma_*(\cE)$ and $F=\Gamma_*(\cF)$ are finitely generated as $R$-modules.
In spite of the previous lemma, $\Ext^i_X(\cE,\cF(t))$ and
$\Ext^i_R(E,F)_t$ may differ. The following lemma will be useful to compare them.

\begin{lem} \label{Ext-Ext}
Assume $\cF$ is ACM. Then, there is $t \in \ZZ$
depending on the minimal graded free resolution of $F$ as $S$-module, such that
there is an isomorphism:
\[
\Ext^i_R(E_{\ge t},F)_{\ge 0} \simeq \bigoplus_{q \ge 0}
\Ext^i_{X}(\cE,\cF(q)), \qquad \mbox{as graded $R$-modules}.
\]
\begin{enumerate}[label=\roman*)]
\item \label{i le m-1} If $i \le m-1$, in the above isomorphism we may replace $E_{\ge t}$ by $E$.
\item \label{if F Ulrich} If $F$ is Ulrich, we may take $t=1-i$.
\item \label{if both Ulrich} If $F$ is linearly presented and $E$
  is generated in degree $0$ over $S$, then $\Ext^1_R(E,F)_{< -1}=0$.
\end{enumerate}
\end{lem}

\begin{proof}
  Since the sheaf $\cF$ is ACM of positive
   dimension, the associated $R$-module $F$ is finitely generated.
   The first statement follows from \cite[Theorem 1]{smith-extension-modules}. If $F$ (or $\cF$) is Ulrich,  the minimal
   graded free resolution of $F$ as $S$-module is linear, so the same result
   allows $t=1-i$, so we get \ref{if F Ulrich}.

   Next, $F$ is a graded Cohen-Macaulay $R$-module, so
   $\Ext^i_R(\bk,F)=0$ for $i\le m$, where $\bk$ is the residue
   field seen as a $R$-module.
  Then, for any $j \in \ZZ$, since the module $E_{<j} := E/E_{\ge j}$ is Artinian, by
  induction on the length of the composition series of $E_{<j}$ we get
  $\Ext^i_R(E_{<j},F)=0$ for $i\le m$.
  The isomorphism needed for \ref{i le m-1} follows from the exact sequence:
  \[
  \Ext^i_R(E_{<j},F) \to \Ext^i_R(E,F) \to
  \Ext^i_R(E_{\ge j},F) \to \Ext^{i+1}_R(E_{<j},F).
  \]

  It remains to prove \ref{if both Ulrich}. A minimal graded presentation
   of $F$ as $S$-module is an exact sequence of the form
   $S^{\beta_1}(-1) \to S^{\beta_0} \epi F$, while $E$ admits a
   surjection $S^{\alpha_0} \epi E$,  for some positive integers
   $\alpha_0,\beta_0$ and $\beta_1$.
   The surjection $S^{\beta_0} \to F$, whose kernel is an $S$-module
   that we denote by $N$, descends to a surjection of
   $R$-modules $R^{\beta_0} \to F$, whose kernel we call $M$. Using $\Hom_R(-,E)$ we get an exact
   sequence of $R$-modules:
   \[
   \Hom_R(R^{\beta_0},E) \to \Hom_R(M,E) \to \Ext^1_R(F,E) \to 0.
   \]
   Now, $M$ is the quotient of $N\otimes _S R$ by $\Tor_1^S(R,F)$, so
   we have surjections
   $R(-1)^{\beta_1} \epi N \otimes_S R \epi M$.
   We deduce that $\Hom_R(M,E)$ is a submodule of
   $\Hom_R(R(-1)^{\beta_1},E)$.
   From the surjection $S^{\alpha_0} \epi E$ we get $R^{\alpha_0} \epi
   E$ and thus a surjection:
   \[
   \Hom_R(R(-1)^{\beta_1},R^{\alpha_0}) \epi \Hom_R(R(-1)^{\beta_1},E).
   \]
   This proves that $\Hom_R(R(-1)^{\beta_1},E)$ vanishes in degree
   strictly below $-1$ so the same
   happens to $\Hom_R(M,E)$ and thus to $\Ext^1_R(F,E)$.
\end{proof}

\subsection{CM-wildness}

We will consider a couple of related notions of CM-wildness for a
closed scheme $X\subset\PP^n$. 
 Algebraically this means that, for any
finitely generated associative $\bk$-algebra
$\Sigma$, the category of MCM modules over $R=\bk[X]$ contains, in some
sense, the category $\Mod_\Sigma$ of finitely generated left
$\Sigma$-modules. We spell this out in more detail in the
next paragraph. We adopt
 \cite[Chapter XIX]{simson-skowronski} as general reference for this part.

\begin{dfn}
  Let $X \subset \PP^n$ be a closed subscheme and set $R=\bk[X]$.
  For any finitely generated associative
  $\bk$-algebra $\Sigma$, and any finitely generated $R$-graded
  $(R,\Sigma)$-bimodule $\bM$, flat 
  over $\Sigma$, define the functor:
\begin{align*}
    \Phi_{\bM} : \Mod_\Sigma &\to \Mod_R, \\
    N &\mapsto \bM \ts _\Sigma N.
  \end{align*}
  The variety $X$ is
  of {\it wild CM-type} if, for any $\Sigma$ as above, there is $\bM$
  such that $\Phi_{\bM}$ takes values in $\MCM_R$ and is a {\it
    representation embedding} in $\MCM_{R}$, which is to say:
  \begin{enumerate}[label=\alph*)]
  \item  the module $N$ is decomposable whenever $\Phi_{\bM}(N)$ is;
  \item for any pair $(N,N')$ of modules in $\Mod_\Sigma$, we have:
    \[
N \simeq N'  \Leftrightarrow   \Phi_{\bM}(N)  \simeq \Phi_{\bM}(N').
    \]
  \end{enumerate}
  The variety $X$ is of {\it wild Ulrich type} if moreover:
\begin{enumerate}[resume, label=\alph*)]
  \item for any $N$ in $\Mod_\Sigma$, $\Phi_{\bM}(N)$ is Ulrich.
\end{enumerate}

The variety $X$ is said to be {\it strictly CM-wild} if for any $\Sigma$
as above there is an $\bM$ such that $\Phi_{\bM}$ is fully faithful into
$\MCM_{R,0}$, that is
  \[
  \Hom_\Sigma(N,N') \simeq \Hom_R(\Phi(N),\Phi(N'))_0,
  \]
If moreover $\Phi_{\bM}(N)$ is
Ulrich for all $N$, then $X$ is  {\it strictly Ulrich wild}.
\end{dfn}

\begin{rmk} \label{well known}
The following facts are well-known, or quickly proved in the next lines.
\begin{enumerate}[label=\roman*)]
\item To check that $X$ is CM-wild, it is enough to construct a
  representation embedding from $\FMod_\Sigma$ into $\MCM_R$, where
  $\Sigma$ is a wild algebra of finite dimension over $\bk$ and $\FMod_\Sigma$ is
  the category of $\Sigma$-modules of finite dimension over $\bk$.
  Thanks to Lemma \ref{equiv-mcm-acm}, we may work
  interchangeably with
  $\ACM_X$ or $\MCM_{R,0}$.
\item If all non-zero graded $R$-modules $\Phi(M)$ are generated in the same
  degree and $X$ is strictly CM-wild, then it is CM-wild, and if $X$
  is strictly Ulrich wild then it is of wild Ulrich type.
  Indeed, given $M,N \in \Mod_\Sigma$, since $\Phi(M)$ and $\Phi(N)$ are generated in the same
  degree, an isomorphism $\Phi(M) \to \Phi(N)$ must be of degree $0$
  and therefore must come from an isomorphism $M \to N$ by full
  faithfulness. Also any idempotent of $\Phi(M)$ must have degree
  $0$ and is therefore induced by an idempotent of $M$.
\item If $X$ is of wild CM-type, then for any $r \in \NN$ there are
  families of dimension at least $r$ consisting of indecomposable ACM
  sheaves on $X$, all non-isomorphic to
  one another. In other words, $X$ is of wild CM-type in the geometric sense. If
  $X$ is of wild Ulrich type, these families can be taken to consist of Ulrich sheaves.
\item Any  exact functor $\Phi:\Mod_\Sigma
  \to \Mod_R$ is of the form $\Phi_{\bM}$ for some finitely generated
  $\Sigma$-flat $(R,\Sigma)$-bimodule $\bM$.
\end{enumerate}
Let $w \ge 1$ be an integer and consider the Kronecker quiver
$\Upsilon=\Upsilon_w$ with two vertices and $w$ arrows from the first vertex to
the second. Write $\Rep_\Upsilon$ for the abelian category of
finite-dimensional $\bk$-representations of $\Upsilon$.
\begin{enumerate}[resume, label=\roman*)]
\item To check that $X$ is strictly CM-wild (resp., of wild CM-type), it suffices to construct
  a fully faithful exact functor (resp., a representation embedding):
  \[
  \Phi : \Rep_\Upsilon \to \ACM_X
  \]
  where $\Upsilon=\Upsilon_w$ is the Kronecker quiver
  with $w \ge 3$. If moreover $\Phi(\cR)$ is Ulrich for
  any $\cR$ in $\Rep_\Upsilon$, then $X$ is strictly
  Ulrich wild (resp., of wild Ulrich type).
  The same argument works if we replace $\Rep_\Upsilon$ with
  $\FMod_\Sigma$ where $\Sigma=\bk[x_1,x_2]$.
\end{enumerate}
\end{rmk}

\section{CM-wildness from extensions}

Let $X\subset \PP^n$ be a closed $\bk$-subscheme, and let $\cA$ and $\cB$ be
coherent sheaves on $X$ such that:
\[
\Ext^1_X(\cB,\cA) \ne 0.
\]
We describe how extensions of $\cB$ by $\cA$ are parametrized by
representations of the \emph{Kronecker quiver} $\Upsilon_w$ having two
vertices and as many arrows as $w=\dim_{\bk} \Ext^1_X(\cB,\cA)$, pointing in the
same direction.

\subsection{The functor from the Kronecker quiver to $\Coh_X$} \label{kronecker-quiver}\label{extensions}

Set $W=\Ext^1_X(\cB,\cA)$ and consider the projective space $\PP(W^*)$ of
lines through the origin in $W$.
Then, over $X \times \PP (W^*)$, there is
a universal extension:
\[
0 \to \cA \boxtimes \cO_{\PP(W^*)} \to \cU \to \cB \boxtimes \cO_{\PP(W^*)}(-1) \to 0,
\]
where we write  $p$ and $q$ for the projections from $X \times \PP (W^*)$
to $X$ and $\PP (W^*)$, and for $\cE \in \Coh_X$ and $\cF \in
\Coh_{\PP (W^*)}$, we set $\cE \boxtimes \cF = p^*(\cE) \ts q^*(\cF)$.
Then we consider:
\[
\Phi_\cU=\bR p_* (q^*(-) \ts \cU) : \bD(\Coh_{\PP (W^*)}) \to \bD(\Coh_X).
\]
It is clear that:
\[
\Phi_\cU(\cO_{\PP (W^*)}) \simeq \cA, \qquad \Phi_\cU(\Omega_{\PP (W^*)}(1)) \simeq \cB[-1].
\]

Set $w= \dim_{\bk} \Ext^1_X(\cB,\cA)$ and consider the Kronecker quiver $\Upsilon =
  \Upsilon_w$.
Then, the natural isomorphism $W \simeq \Hom_{\PP (W^*)}(\Omega_{\PP
  (W^*)}(1),\cO_{\PP (W^*)})$ provides an equivalence:
\begin{equation}
\label{Xi}
\Xi : \bD(\Rep_\Upsilon) \simeq \langle \Omega_{\PP (W^*)}(1),\cO_{\PP (W^*)} \rangle.
\end{equation}
We compose this equivalence with the inclusion of $\langle \Omega_{\PP
  (W^*)}(1),\cO_{\PP (W^*)} \rangle$ into $\bD(\Coh_{\PP (W^*)})$.
Explicitly, this is described as follows.
Choose a basis $(e_1,\ldots,e_w)$ of $W=\Ext^1_X(\cB,\cA)$.
Let $\cR$ be a representation of $\Upsilon$ having dimension vector
$(a,b)$.
Then $\cR$ corresponds to the choice of $w$ linear maps $m_1,\ldots,m_w:\bk^a \to \bk^b$.
Take the element:
\begin{equation}
  \label{xi}
  \xi=\sum_{i=1}^w m_i\otimes e_i\in  \Hom_{\bk}(\bk^b,\bk^a) \ts  W.
\end{equation}
Then, under the identification $W \cong \Hom_{\PP (W^*)}(\Omega_{\PP
  (W^*)}(1),\cO_{\PP (W^*)})$, we obtain from $\xi$ a morphism:
\begin{equation}
  \label{M}
  M :\Omega_{\PP  (W^*)}(1)^b \to \cO_{\PP (W^*)}^a.
\end{equation}
The cone of $M$ is the element of $\bD(\Coh_{\PP (W^*)})$ associated with $\cR$
via $\Xi$. This is directly extended to morphisms.

We consider a functor $\Phi$ which can be thought of as the restriction of $\Phi_\cU \circ \Xi$ to $\Rep_\Upsilon$:
\begin{equation}\label{phi}
  \Phi : \Rep_\Upsilon \to \Coh_X.
\end{equation}

 Let us first give an explicit description of $\Phi$.  At the level of objects, given a representation $\cR$ of the quiver $\Rep_\Upsilon$ with dimension vector $(a,b)$
  let $(m_1,\ldots,m_w)$ be the $w$ linear maps associated with $\cR$ and let $\xi$ be as in \eqref{xi}.  Then $\Phi(\cR)$ fits as
  middle term of a
  representative of the extension class
  corresponding to $\xi$:
  \[
  0\to\cA^a\to\Phi(\cR)\to\cB^b\to 0.
  \]

  Let us check now that this is well-defined on morphisms. Let $\cS$ be another representation
  of $\Upsilon$, of dimension vector $(c,d)$, corresponding to the
  linear maps $(n_1,\ldots,n_w)$.
  A morphism $\lambda:\cR\to\cS$ of representations is given by linear
  maps $\alpha:\bk^a\to\bk^c$ and $\beta:\bk^b\to\bk^d$ such that:
  \begin{equation}
    \label{commutes}
    n_i\alpha=\beta m_i, \qquad \mbox{for all $i=1,\ldots,w$.}
  \end{equation}

  Consider the map of coherent sheaves $\beta_\cA=\beta \ts
  \id_\cA:\cA^a\to\cA^c$.
  Then, $\beta_\cA$ defines a morphism of extensions:
  \begin{equation}
    \label{diag0}
    \xymatrix@-2ex{
      0 \ar[r] &  \cA^a \ar[d]^-{\beta_\cA} \ar[r] &  \Phi(\cR)  \ar[d]^{\phi} \ar[r] &  \cB^b \ar@{=}[d] \ar[r] & 0\\
      0 \ar[r] &  \cA^c  \ar^-i[r] & \cD \ar^p[r] & \cB^b  \ar[r] & 0
    }
  \end{equation}
  for a certain sheaf $\cD$ representing the element:
  \[
  \sum_{i=1}^w \beta m_i\otimes e_i\in \Ext^1_X(\cB^b,\cA^c).
  \] Analogously  $\alpha_\cB = \alpha \ts \id_\cB:\cB^b\to\cB^d$ defines:
  \begin{equation}
    \label{diag2}
    \xymatrix@-2ex{
      0 \ar[r] &  \cA^c \ar@{=}[d] \ar^{i'}[r] &  \cD'  \ar^-{\phi'}[d] \ar^{p'}[r] &  \cB^b \ar[d]^{\alpha_\cB} \ar[r] & 0\\
      0 \ar[r] &  \cA^c  \ar[r] & \Phi(\cS) \ar[r] & \cB^d  \ar[r] & 0
    }
  \end{equation}
  with the upper row representing an extension class in:
  \[\sum_{i=1}^w n_i\alpha\otimes e_i\in \Ext^1_X(\cB^b,\cA^c).\]

  Because of \eqref{commutes}, the lower extension of \eqref{diag0}
  is the same as the upper one in \eqref{diag2}. Then the morphisms $\phi'$ and $\phi$
  of extensions compose to give a map from $\Phi(\cR)$ to $\Phi(\cS)$.

  This construction agrees with the principle of considering
  $\Phi$ as restriction of $\Phi_\cU \circ \Xi$ to $\Rep_\Upsilon$.
  Indeed, the representation $\cR$ is mapped by
  $\Xi$ to the cone of the matrix $M$ of \eqref{M}, which is
  sent by $\Phi_\cU$ to the cone of:
  \[
  \Phi_\cU(M):\cB^b[-1] \to \cA^a.
  \]
  By construction this cone is  represented by the extension class $\Phi(\cR)$.

\subsection{Representation embeddings of the Kronecker quiver}

Now we state a basic result on representation embeddings and fully
faithful embeddings of the Kronecker quiver via extensions.
Some forms of this result have been used already by several authors,
however the following rather general statement seems to be new.

\begin{THM} \label{itswild} Let $\cA$ and $\cB$ be simple coherent sheaves on a
  closed subscheme $X \subset \PP^N$. \label{strictly wild by extensions}
  \begin{enumerate}[label=\roman*)]
  \item \label{nonstrict} Let $\cA$ and $\cB$ be semistable with $\pp(\cB) \preceq
    \pp(\cA)$ and suppose that any non-zero morphism $\cA \to \cB$ is
    an isomorphism. Then the functor $\Phi$  from (\ref{phi}) is a representation embedding.
  \item \label{strict} Assume that $\Hom_X(\cA,\cB)=\Hom_X(\cB,\cA)=0$.
  Then the functor $\Phi$ is fully faithful.
  \end{enumerate}
\end{THM}

\begin{proof}
  To check \ref{nonstrict} we first prove that, given an irreducible representation $\cR$ of $\Upsilon$, the
  associated sheaf $\cF=\Phi(\cR)$ is indecomposable. Let $\cR$ have
  dimension vector $(b,a)$ so we have:
\begin{equation}
  \label{AB}
0 \to \cA^a \rx{i} \cF \rx{p} \cB^b \to 0.
\end{equation}

  Assume first $\pp(\cB) \prec \pp(\cA)$. Then, \eqref{AB} is the Harder-Narasimhan
  filtration of $\cF$, so the graded object $\gr(\cF)$ associated with $\cF$ is
  just $\cA^a \oplus \cB^b$. Assume $\cF \simeq \cF' \oplus \cF''$,
  with $\cF' \ne 0 \ne \cF''$. By the uniqueness of $\gr(\cF)$, we
  have $\gr(\cF') \simeq \cA^{a'} \oplus \cB^{b'}$ and
  $\gr(\cF'') \simeq \cA^{a''} \oplus \cB^{b''}$
  for some $(b',a')$
  and $(b'',a'')$ with $a'+a''=a$ and $b'+b''=b$.
  It follows that $\cA^{a'}$ is the maximal destabilizing subsheaf of
  $\cF'$ with quotient $\cB^{b'}$, that is, $\cF'$ is an extension of the
  form:
  \[
  0 \to \cA^{a'} \to \cF' \to \cB^{b'} \to 0,
  \]
  associated with some $\xi' \in W \ts \bk^{a'} \ts
  \bk^{b'}$. Similarly, there is $\xi'' \in W \ts \bk^{a''} \ts
  \bk^{b''}$ corresponding to $\cF''$.
  Moreover, composing the embedding $\cA^{a'} \mono \cF'$  with $\cF'
  \mono \cF$ we
  get a map $\cA^{a'} \mono \cF$, that composes to zero with $p$, for
  $\cA$ and $\cB$ are semistable with $\pp(\cB) \prec \pp(\cA)$.

  We obtain thus a map $\cA^{a'} \mono \cA^a$ which must be of the
  form $\alpha \ts \id_{\cA}$ for some monomorphism $\alpha : \bk^{a'}
  \to \bk^a$, because $\cA$ is simple.
  This induces a map $\cB^{b'} \to \cB^b$ which is likewise of the
  form $\beta \ts \id_\cB$ for some $\beta : \bk^{b'}
  \to \bk^b$.
  This defines a representation $\cR'$ of dimension vector $(b',a')$
  corresponding to $\xi'$ which is
  a subrepresentation of $\cR$, the embedding being given by
  $(\beta,\alpha)$. The quotient $\cR''=\cR/\cR'$ corresponds 
  then to $\xi''$. The embedding $\cF'' \mono \cF$ provides a
  splitting $\cR'' \to \cR$, so $\cR \simeq \cR' \oplus \cR''$, with
  $\cR' \ne 0 \ne \cR''$ which
  is what we wanted.

  Now assume $\pp(\cB)=\pp(\cA)$ and take $\cR$ indecomposable such that
  $\cF = \cF'
  \oplus \cF''$ with $\cF' \ne 0 \ne \cF''$.
  Then \eqref{AB} is a Jordan-Hölder
  filtration of $\cF$, so the graded object $\gr(\cF)$ associated with $\cF$ is
  again $\cA^a \oplus \cB^b$, hence $\gr(\cF')$ and
  $\gr(\cF'')$ take the same forms as above, in particular $\cF'$ and
  $\cF''$ are semistable with $\pp(\cF')=\pp(\cA)=\pp(\cF'')$.

  Next, we compose $i$ with the projection $\cF \to \cF''$ to get a map
  $q:\cA^a \to \cF''$. The sheaf $\im(q)$ is semistable with
  $\pp(\im(q))=\pp(\cA)$. Composing with the projection to
  $\cB^{b''}$, since any non-zero map $\cA \to \cB$ is an isomorphism,
  we get as image a direct sum of copies of $\cA$. So $\im(q)$
  projects onto copies of $\cA$ and thus, since $\cA$ is simple, we
  actually have $\im(q)
  \simeq \cA^{a''}$ for some integer $a''$, which also gives $\ker(q)
  \simeq \cA^{a'}$ with $a'=a-a''$. By the same argument, composing
  the injection $\cF' \mono \cF$ with $p$ gives a map $j$ whose image
  is $\cB^{b'}$, for some integer $b'$, and whose cokernel is then
  $\cB^{b''}$ with $b''=b-b'$.
  Using that $\cA$ and $\cB$ are simple, we finally get a commutative exact diagram:
  \[
    \xymatrix@-2ex{
      0 \ar[r] &  \cA^{a'} \ar_-{\alpha'\ts \id_\cA}[d] \ar[r] & \cF' \ar[d] \ar^-p[r] &  \cB^{b'} \ar^-{\beta'\ts \id_\cB}[d] \ar[r] & 0\\
      0 \ar[r] &  \cA^a  \ar_-{\alpha''\ts \id_\cA}[d] \ar[r] & \cF \ar[r] \ar[d] & \cB^b \ar^-{\beta''\ts \id_\cB}[d]  \ar[r] & 0 \\
      0 \ar[r] &  \cA^{a''}  \ar[r] & \cF'' \ar^-p[r] &  \cB^{b''} \ar[r] & 0
    }
    \]
    for some maps $\alpha' : \bk^{\alpha'} \to \bk^\alpha$, $\alpha''
    : \bk^{\alpha} \to \bk^{\alpha''}$, and similarly for $\beta$.
  This says that there are representations $\cR' \ne 0 \ne \cR''$ of
  $\Upsilon$ with dimension vectors $(b',a')$ and $(b'',a'')$ such
  that $\cF' \simeq \Phi(\cR')$ and $\cF''\simeq \Phi(\cR'')$. Also,
  the maps $\alpha'$, $\alpha''$, $\beta'$, $\beta''$ provide a exact
  sequence:
  \[
  0 \to \cR' \rx{(\beta',\alpha')} \cR \rx{(\beta'',\alpha'')} \cR'' \to 0.
  \]
  Using the splitting map $\cF \to \cF'$ we see that $\cR = \cR'
  \oplus \cR''$, which is what we needed.

  Finally, we would like to show that two representations $\cR$ and
  $\cS$ of
  $\Upsilon$ are isomorphic if and only if their images via $\Phi$ are.
  Let $\cR$ and $\cS$ have dimension
  vectors $(b,a)$ and $(d,c)$. We may suppose that $\cR$ and $\cS$ are
  irreducible, so that $\cF=\Phi(\cR)$ and $\cG=\Phi(\cS)$ are indecomposable, by the
  first part of the proof.
  Take an isomorphism $\phi : \cF \to \cG$.
  Composing $\phi$ on the left  with the injection
  $\cA^a \to \cF$ and on the right with the projection $\cG \to
  \cB^b$, we get a map $\phi_0$.
  We distinguish two cases according to whether $\phi_0$ is zero or not.

  In the latter case, there is a summand
  $\cA$ of $\cA^a$ that maps non-trivially, hence isomorphically, to a summand $\cB$ of
  $\cB^d$. We deduce that $\cB$ is a direct summand of $\cG$. By the
  assumption on irreducibility of $\cS$, $\cG \simeq \cB$, which
 gives the conclusion.

  In the former case, we get a map $\alpha_\cA : \cA^a \to \cA^c$ inducing an exact
  commutative diagram:
  \[    \xymatrix@-2ex{
      0 \ar[r] &  \cA^a \ar^-{\alpha_\cA}[d] \ar^-i[r] &
      \cF  \ar[d]^-{\phi} \ar^-p[r] &  \cB^b \ar^-{\beta_\cB}[d] \ar[r] & 0\\
      0 \ar[r] &  \cA^c  \ar[r] & \cG \ar[r] & \cB^d  \ar[r] & 0
    }
  \]
  Hence $\cok (\alpha_\cA) \simeq \cA^{c-a} \simeq \cB^{b-d} \simeq
  \ker(\beta_\cB)$. Then $a \le c$ and $d \le b$. But using
  $\phi^{-1}$ we get the opposite inequalities, which implies that $\alpha_\cA$
  and $\beta_\cB$ are isomorphisms.
  Again $\alpha_\cA = \alpha \ts \id_\cA$ and $\beta_\cB = \beta \ts
  \id_\cB$. It follows that  $(\beta,\alpha)$ induces an
  isomorphism $\cR \to \cS$.

  It remains to prove \ref{strict}. To check this, consider a commutative diagram:
  \[\xymatrix@-2ex{
    0 \ar[r] &  \cA \ar[d]^-{0} \ar^{i}[r] &  \cD  \ar[d]^-{\lambda} \ar[r]^{p} &  \cB \ar[d]^-{0} \ar[r] & 0\\
    0 \ar[r] &  \cA  \ar[r]^{i'} & \cD' \ar[r]^{p'} & \cB  \ar[r] & 0
  }
  \]

  Since $p'\circ\lambda=0$, we have $\im \lambda\subseteq \cA$. But
  $\lambda i=0$ implies that $\lambda$ factors as:
  \[
  \xymatrix@C-4ex@R-2ex{
    \cD\ar[rrr]^{\lambda} \ar[dr] && &  \cA \\
    & \cB \ar@{=}[r] &\cD/\cA \ar[ur]_-{\bar{\lambda}}  &
  }
  \]
  If $\lambda \ne 0$, this would give a nonzero map $\bar \lambda : \cB \to \cA$, contradicting $\Hom_X(\cB,\cA)=0$.
  With this in mind, we deduce the injectivity of the natural map:
  \begin{equation}
    \label{natural}
  \Hom_\Upsilon(\cR,\cS) \to   \Hom_X(\Phi(\cR),\Phi(\cS)).
  \end{equation}

  As for surjectivity, given a morphism $\mu : \Phi(\cR) \to
  \Phi(\cS)$, we compose $\mu$ on one side with the projection
  $\Phi(\cS) \to \cB^d$, and with the injection $\cA^a \to \Phi(\cR)$
  on the other side. We obtain thus a map $\cA^a \to \cB^d$, which
  must vanish since $\Hom_X(\cA,\cB)=0$.
  We deduce that $\mu$ defines maps $\cA^a \to \cA^b$ and  $\cB^c \to
  \cB^d$, which must be of the form $\beta \ts \id_\cA$ and $\alpha
  \ts \id_\cB$ by the assumption that $\cA$ and $\cB$ are simple.
  The pair $(\beta,\alpha)$ defines a morphism
  $\cR \to \cS$ whose image via \eqref{natural} is $\mu$.
\end{proof}

We deduce a criterion for an ACM
variety being strictly CM-wild.

\begin{corol} \label{corollario}
In the hypothesis of Theorem \ref{strictly wild by extensions}, case
\ref{nonstrict}, resp. case \ref{strict}, we have:
\begin{enumerate}[label=\roman*)]
  \item if $w \ge 3$ and $\cA$ and $\cB$ are ACM, then $X$ is CM-wild, resp. strictly CM-wild;
  \item if moreover $\cA$ and $\cB$ are Ulrich, then $X$ is Ulrich
    wild, resp. strictly Ulrich wild.
  \end{enumerate}
\end{corol}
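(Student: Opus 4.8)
The plan is to feed the functor $\Phi$ of Theorem~\ref{strictly wild by extensions} into the criterion of Remark~\ref{well known}~iv). Since $X$ is ACM of positive dimension, sheafification identifies $\MCM_R$ with the full subcategory of ACM sheaves on $X$, and under this equivalence $\Hom_X(\cE,\cF)$ is precisely the degree-zero part $\Hom_R(M,N)_0$ of the graded module homomorphisms. So I would reduce the statement to checking two things: first, that $\Phi$ takes values in ACM (resp. Ulrich) sheaves, so that composing with the module-of-global-sections functor $\Gamma_*$ yields a functor $\Rep_\Upsilon \to \MCM_R$ (resp. $\to \Ulr_R$); and second, that this composite remains fully faithful and exact. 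Full faithfulness is immediate from the identification of Hom-spaces, so everything will follow from Remark~\ref{well known}~iv) once the target and exactness are controlled, using the hypothesis $w \ge 3$.

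For the target I would use that both the ACM and the Ulrich conditions are stable under extensions. By construction every $\Phi(\cR)$ sits in a short exact sequence
\[
0 \to \cA^a \to \Phi(\cR) \to \cB^b \to 0 .
\]
Twisting by $\cO_X(t)$ and taking the long exact cohomology sequence, the intermediate cohomology $\HH^i(X,\Phi(\cR)(t))$ for $0<i<\dim X$ is squeezed between that of $\cA^a$ and of $\cB^b$ and hence vanishes, giving~(i). For~(ii) I would recall that an Ulrich sheaf has a linear resolution, so that $\HH^i(X,\cA)=\HH^i(X,\cB)=0$ for all $i>0$ and $\HH^0(X,\cA(-1))=\HH^0(X,\cB(-1))=0$. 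The same long exact sequence then gives $\HH^0(X,\Phi(\cR)(-1))=0$ and, since $\HH^1(X,\cA^a)=0$, a short exact sequence on global sections, whence
\[
\dim_\bk \HH^0(X,\Phi(\cR)) = a\,\deg(X)\rk(\cA) + b\,\deg(X)\rk(\cB) = \deg(X)\rk(\Phi(\cR)),
\]
so $\Phi(\cR)$ is Ulrich.

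It remains to transfer exactness. Sheafification is exact, so a short exact sequence $0 \to \cR' \to \cR \to \cR'' \to 0$ in $\Rep_\Upsilon$ is carried by $\Phi$ to a short exact sequence of ACM sheaves; the real content is that applying $\Gamma_*$ keeps it short exact, i.e. that global sections stay surjective in every twist. By the long exact sequence this reduces to the vanishing of $\HH^1(X,\Phi(\cR')(t))$, equivalently of $\HH^1(X,\cA(t))$ and $\HH^1(X,\cB(t))$. When $\dim X \ge 2$ this is part of the ACM hypothesis, and in case~(ii) it holds in the relevant twists by the linear resolution, the modules being concentrated in non-negative degrees where the first cohomology of an Ulrich sheaf vanishes. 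With exactness in hand, $\Gamma_* \circ \Phi : \Rep_\Upsilon \to \MCM_R$ is a fully faithful exact functor with $w \ge 3$, landing in $\Ulr_R$ in case~(ii), and Remark~\ref{well known}~iv) yields that $X$ is strictly CM wild, resp. strictly Ulrich wild.

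The hard part is exactly this last exactness transfer. On a one-dimensional $X$ the ACM condition imposes nothing on intermediate cohomology, so $\HH^1(X,\cA(t))$ need not vanish and $\Gamma_*$ need not preserve surjectivity on sections; the argument is rescued in case~(ii) by the stronger cohomological vanishing built into the Ulrich condition, which is precisely why the Ulrich hypothesis is the natural one for the curve sections used in the later construction.
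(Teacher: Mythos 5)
Your argument is the same as the paper's, which is essentially a two\--line proof: $\Phi(\cR)$ sits in an extension of $\cB^b$ by $\cA^a$, the ACM and Ulrich conditions are closed under extensions, and one concludes by Remark \ref{well known}\,iv); your cohomological verifications of these closure properties and of the Ulrich count $\hh^0(\Phi(\cR))=\deg(X)\rk(\Phi(\cR))$ are exactly what the paper leaves implicit. The one place you go beyond the paper is the exactness of $\Gamma_*\circ\Phi$ as a functor to $\MCM_R$, which the paper does not discuss at all: your observation that for $\dim X\ge 2$ this follows from the vanishing of intermediate cohomology of $\cA$, while for a curve the ACM hypothesis alone does not force $\HH^1(X,\cA(t))=0$ and one must fall back on the Ulrich vanishing, is a legitimate subtlety in the statement of part (i). It is harmless in practice (every application of the corollary to a one\--dimensional $X$ in the paper uses Ulrich sheaves, so part (ii) applies), but you are right that as written the reduction to Remark \ref{well known}\,iv) for part (i) on curves requires this extra input, and it would be worth recording the Ulrich (or at least a $1$\--regularity) hypothesis there.
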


\begin{proof}
  By construction of the functor $\Phi$ of Theorem \ref{strictly
    wild by extensions}, the sheaf $\Phi(\cR)$ associated with a
  representation $\cR$ of $\Upsilon$ is ACM (respectively, Ulrich) if
  $\cA$ and $\cB$ are
  ACM (respectively, Ulrich).

  The composition of $\Phi$ with the equivalence $\ACM_X \simeq
  \MCM_{R,0}$ gives the statement in case  \ref{strict}. For case
  \ref{nonstrict}, we further compose $\Phi$ with the inclusion
  $\MCM_{R,0} \to \MCM_{R}$. The resulting functor is a representation
  embedding by Remark \ref{well known}. To see this, we denote by $F$ and
  $F'$ the $R$-modules associated with $\Phi(\cR)$ and $\Phi(\cR')$. We claim that an isomorphism
  $F \to F'$ of graded $R$-modules must have degree
  $0$.

  Indeed, given an injective morphism $F \to F'$
  of degree $t$, that is, an injective map $F \to F'(t)$, assuming $t<0$ and letting $A$ and $B$ be the
  $R$-modules associated with $\cA$ and $\cB$, we see that any
  submodule $A$ of $E$
  maps to zero in $A(t)$ by semistability of $\cA$. Also, $A$ maps to
  zero in $B(t)$ because choosing an injection $B(t) \mono B$ we would
  get a map $A \to B$ which is not an isomorphism. So the map $F \to
  F'(t)$ cannot be injective, namely $t \ge 0$. Using the inverse $F'
  \to F$ we see that $t \ge 0$, so finally $t=0$.

  The same argument applies to idempotents of $F$ and shows that any
  non-trivial splitting of $F$ into $R$-modules takes place in
  $\MCM_{R,0}$. This shows that the functor $\Rep_\Upsilon \to \MCM_R$
  is a representation embedding.
\end{proof}

\begin{rmk}
The hypothesis $\Hom_X(\cB,\cA)=0$ in Theorem \ref{strictly wild by extensions}, case ii), is
necessary. If $\Hom_X(\cB,\cA)\ne 0$ we could indeed consider the map:
\[
\phi:\cD\rx{p}\cB\to \cA\rx{i}\cD
\]
This map $\phi$ is not zero, and makes the following diagram commute:
\[
\xymatrix@-2ex{
  0 \ar[r] &  \cA \ar[d]^-{0} \ar^{i}[r] &  \cD  \ar[d]^-{\phi} \ar[r]^{p} &  \cB \ar[d]^-{0} \ar[r] & 0\\
  0 \ar[r] &  \cA  \ar[r]^{i'} & \cD \ar[r]^{p'} & \cB  \ar[r] & 0
}
\]
 Notice, however that any $\phi$ fitting in such a diagram will be
 nilpotent.

As an explicit example, let $X\subset\PP^{m+1}$ be a hypersurface of degree
$d$ and $Z\subset X$ be an arithmetically Gorenstein (which is to say, $\bk[Z]$
is a graded Gorenstein ring) subscheme of codimension two and index
$i_Z$, where:
\[
i_Z=\max \{s \in \ZZ \mid \HH^{m-1}(X,\cI_{Z|X}(s))\neq 0\}.
\]
Define $e=i_Z+m+2-d$ and assume that $e<0$ so that $\Hom_X(\cI_{Z| X}(e),\cO_X)\neq
0$. Let $\cD$ be the sheaf fitting as the middle term of the non-trivial extension of $\cI_{Z|
  X}(e)$ by $\cO_X$. (One can show that this extension exists and is
unique up to a nonzero scalar, by the definition of $e$ and by Serre duality.)

Whenever $Z$ is not a complete intersection inside $X$, $\cD$ is indecomposable.
Anyway $\cD$ is an ACM sheaf of rank $2$ over $X$ which is never simple, as it
always admits a nonzero nilpotent endomorphism. The conclusion of Theorem
\ref{strictly wild by extensions} fails in this case.

\end{rmk}

\section{Stable syzygies of Ulrich modules}

Let $X \subset \PP^n$ be a closed ACM subscheme of dimension $m \ge 1$,
let $Y$ be a general linear section of $X$ of
codimension $c < m$. Set $T=\bk[Y]$, $R=\bk[X]$ and write $\omega_Y$
for the dualizing sheaf of $Y$. The ideal $I_{Y\mid X}$ of $Y$ in $X$ is generated by a regular
sequence of linear forms of $R$ of length $c$.
Looking at a finitely generated graded module $E$ over $T$ as a graded
module over $R$, we take its minimal
graded $R$-free resolution:
\begin{equation}
  \label{reso}
  0 \la E \la F_0 \lx{d_1} F_1 \la \cdots \la F_{\ell-1} \lx{d_\ell} F_{\ell} \la \cdots
\end{equation}
Write $\Om^\ell_R(E)$ for the $\ell^\mathrm{th}$ syzygy of $E$ over
$R$, by which we mean
$\Om^\ell_R(E)=\im(d_\ell)$. It is well-known that, if $E$ is MCM
over $T$, then $\Om_R^\ell(E)$ is MCM over $R$ for $\ell \ge c$.

Let $\SMCM_{R}$ be the stable category of graded maximal
Cohen-Macaulay (MCM) modules over $R$.
Given $E,E'$ in $\MCM_{R}$, we write $\SHom_R(E,E')$ for the morphisms
in this category, namely the morphisms from $E$ to
$E'$, modulo the ideal of morphisms that factor
through a free $R$-module. We write $\SHom_R(E,E')_t$ for the graded
piece of degree $t$ of $\SHom_R(E,E')$. We will also use the notation
$\SMCM_{R,0}$, the stable category where we take $\SHom_R(E,E')_0$ as
set of morphisms from $E$ to $E'$.
We write $\Pi$ for the stabilization functor:
\[
\Pi : \MCM_{R}  \to \SMCM_{R}.
\]
For $\ell \ge c$, we have also the $\ell^\mathrm{th}$ syzygy stable functor:
  \begin{align*}
  \SOm^\ell : \MCM_{T} & \to \SMCM_{R}, \\
  E &\mapsto \Pi\circ\Om_R^\ell(E).
  \end{align*}

The following theorem is the center of this section and will play a major role throughout the rest of the paper.

\begin{THM} \label{stable FF}
   Let $X \subset \PP^n$ be a closed non-degenerate ACM subscheme of dimension $m \ge
   1$. Assume that $X$ is not of minimal degree, namely,
   $\deg(X)>n-m+1$. Then the restriction to $\Ulr_{T,0}$
   of the $c^\mathrm{th}$ stable syzygy functor provides a fully faithful embedding:
  \begin{align*}
  \SOm^c : \Ulr_{T,0}  & \to \SMCM_{R,0}.
  \end{align*}
\end{THM}

We start with a lemma that characterizes varieties of minimal degree
by a negativity condition on the canonical sheaf.

\begin{lem}\label{omega-non-zero}
  Let $X \subset \PP^n$ be a closed ACM subscheme of dimension $m \ge 1$. Then $X$ has minimal degree $\deg(X)=n-m+1$ if and only if  $\HH^0(X,\omega_X(m-1))= 0$.
\end{lem}

\begin{proof}
  Without loss of generality, we may assume that $\bk$ is
  algebraically closed. Let us work by induction on $m$. For $m=1$,
  the statement holds as an ACM
  subscheme $X\subset \PP^n$ has
  minimal degree if and only if the sectional genus of $X$ is zero,
  see the discussion at \S \ref{minimal}. For $m\ge 2$, if $Y$ is a
  hyperplane section of $X$,  the adjunction
  formula gives an exact
  sequence
  \[
  0 \to \omega_{X}(m-2) \to \omega_X(m-1) \to \omega_Y(\dim(Y)-1) \to 0
  \]
  Because $X$ is ACM and $m \ge 2$ we get $\HH^1_*(X,\omega_X)=0$.
  So, since by the induction hypothesis the statement holds for $Y$, taking global sections
  of the above sequence we see that it also holds for $X$.

  For a proof in the language of modules, note that the dual of the
  minimal $S$-resolution of $\bk[X]$
  provides a resolution of the canonical module $K_X$ (see \cite[Remark~1.2.4]{migliore:liaison}) and therefore, by \cite[Theorem~0.4]{eisenbud-green-hulek-popescu}, $X$ is of minimal degree if and
  only if it is $2$-regular if and only if
  $\HH^0(X,\omega_X(m-1))=(K_X)_{m-1}=0$.
\end{proof}

The following lemma will be one of
the keystones of our analysis. Given a finitely generated graded $R$-module $M$, we write $\langle
M_{\le d}\rangle$ for the graded submodule of $M$ generated by the elements of
degree at most $d$ of $M$. We also write $M^*$ for the dual $\Hom_R(M,R)$.

\begin{lem} \label{degree}
  Fix the hypothesis as in Theorem \ref{stable FF}, let $L$ be an Ulrich module over $T$, and set $M=\Om_R^c(L)$.

  Then we have a functorial
  exact sequence:
  \[
  0 \to \langle M^*_{\le {1-c}} \rangle \to M^* \to \Hom_T(L,T(c)) \to 0.
  \]
\end{lem}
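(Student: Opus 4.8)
The plan is to produce the sequence by dualizing the truncated minimal resolution and reading off the cohomology via a change-of-rings argument. Writing $\underline{\ell}=(\ell_1,\dots,\ell_c)$ for the regular sequence of linear forms cutting out $Y$ inside $X$, I view $L$ as an $R$-module killed by $\underline{\ell}$. Then $\Hom_R(L,R)=0$ (an $R$-linear map would send $L$ into the $\ell_1$-torsion of $R$, which is zero), and by Rees's theorem $\Ext^i_R(L,R)=0$ for $i<c$, while $\Ext^c_R(L,R)\cong\Hom_T(L,T(c))$, using $\Ext^c_R(T,R)\cong T(c)$ coming from the Koszul complex on $\underline{\ell}$. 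Truncating \eqref{reso} to the length-$c$ resolution $0\to M\xrightarrow{\iota}F_{c-1}\to\cdots\to F_0\to L\to 0$ and applying $\Hom_R(-,R)$, the vanishing of the low $\Ext$'s makes the dual complex exact except in top degree, where the cohomology is $\Ext^c_R(L,R)$. This gives the functorial short exact sequence
\[
0\to\im(\iota^*)\to M^*\xrightarrow{\ \delta\ }\Hom_T(L,T(c))\to 0,
\]
with $\iota^*=\Hom_R(\iota,R)$ and $\delta$ the natural connecting map, and it remains to identify $\im(\iota^*)=\langle M^*_{\le 1-c}\rangle$.

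One inclusion is a pure degree count. Since the resolution is minimal and $L$ is generated in degree $0$ (as $L$ is Ulrich), each $F_i$ is generated in degrees $\ge i$; in particular $F_{c-1}$ is generated in degrees $\ge c-1$, so $F_{c-1}^{*}$ is generated in degrees $\le 1-c$, and therefore so is its image $\im(\iota^*)\subseteq M^*$. Hence $\im(\iota^*)\subseteq\langle M^*_{\le 1-c}\rangle$, needing nothing beyond minimality.

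The reverse inclusion $\langle M^*_{\le 1-c}\rangle\subseteq\im(\iota^*)=\ker\delta$ is the crux, and this is where the hypothesis $\HH^0(Y,\omega_Y(m-c-1))\ne 0$ must be used. Because $\delta$ is graded and surjective, the inclusion is equivalent to $\Hom_T(L,T(c))_{e}=0$ for $e\le 1-c$, that is, to $\Hom_T(L,T)_{\le 1}=0$. As $L$ is MCM I may read $\Hom_T(L,T)_j=\HH^0(Y,\cHom(\cL,\cO_Y)(j))$, where $\cL=\widetilde{L}$. Setting $s=m-c=\dim Y$, Serre duality on the CM scheme $Y$ converts this into a top-degree statement about $\cL\otimes\omega_Y$. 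The hypothesis furnishes a nonzero section of $\omega_Y(s-1)$, i.e.\ an inclusion $\cO_Y(1-s)\hookrightarrow\omega_Y$ whose cokernel is supported in dimension $<s$; tensoring by $\cL$ and twisting, the induced map on $\HH^s$ is surjective, so $\HH^s(Y,\cL\otimes\omega_Y(-j))$ is bounded above by $\HH^s(Y,\cL(1-s-j))$. Finally, an Ulrich sheaf is $0$-regular, whence $\HH^s(Y,\cL(l))=0$ for $l\ge -s$; since $j\le 1$ gives $1-s-j\ge -s$, the desired vanishing follows.

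I expect the last step to be the main obstacle, and the genuine technical care lies in running the Serre-duality comparison when $\cL$ is \emph{not} locally free (the non-normal situation the paper must handle), where $\cHom(\cL,\cO_Y)$ replaces the honest dual $\cL^\vee$ and $\omega_Y$ is only a rank-one torsion-free sheaf, so the cokernel and surjectivity arguments need to be run at the level of $\cExt$ rather than of ordinary cohomology. Naturality of all three terms and of $\delta$ in $L$, with $M=\Om^c_R(L)$ taken from the minimal resolution (hence free of free summands), then yields the asserted functoriality, which is precisely what is needed to turn $\SOm^c$ into a functor on $\Ulr_T$.
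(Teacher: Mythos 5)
Your proof is correct and follows essentially the same route as the paper's: dualize the truncated minimal resolution to get the four-term exact sequence with cokernel $\Hom_T(L,T(c))$, use minimality of $F_{c-1}$ for the inclusion $\im(\iota^*)\subseteq\langle M^*_{\le 1-c}\rangle$, and prove the reverse inclusion by the vanishing $\Hom_T(L,T)_{\le 1}=0$ obtained from the section of $\omega_Y(m-c-1)$ together with the Ulrich vanishing $\HH^{s}(Y,\cL(-s))=0$. The only differences are cosmetic: you identify the cokernel by Rees's theorem and the change-of-rings isomorphism $\Ext^c_R(L,R)\simeq\Hom_T(L,T(c))$, where the paper runs the same computation sheaf-theoretically via $\cHom_X(-,\omega_X)$ and Grothendieck duality; and in the final vanishing you dualize first (landing on $\HH^s(Y,\cL\otimes\omega_Y(-j))$, which as you note requires care for non-locally-free $\cL$ on singular $Y$), whereas the paper first uses the section to embed $\cO_Y(1)\mono\omega_Y(m-c)$ and only then applies the robust form of duality $\Hom_Y(\cL,\omega_Y(m-c))\simeq\HH^{m-c}(Y,\cL(c-m))^*$, which entirely sidesteps the issue you flag — you may want to reorder your last step accordingly.
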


\begin{proof}
  Recall that $L$ is generated in a single
  degree, and that the number of minimal generators of $L$ equals $\alpha_0 =
  \deg(X) \rk(L)$. In other words, we can assume $F_0 \simeq R^{\alpha_0}$.
  By the minimality of the resolution, we have, for $i \ge 1$:
  \[
  F_\ell \simeq \bigoplus_{j \ge \ell} R(-j)^{\alpha_{\ell,j}}, \qquad \mbox{for
    some integers $\alpha_{\ell,j}$}.
  \]
  Also, $X$ is not a linear space, so neither is $Y$, so that $L$ has no free
  summands.

We are going to apply the functor  $\Hom_R(-,R)$ to \eqref{reso}. We have:

\begin{itemize}
  \item $\Ext^i_R(L,R)=0$ for $i=0,\dots,c-1$. This is due to the fact
    that the smallest integer $l$ such that $\Ext^l_R(L,R)\neq 0$ is the grade
    (see \cite[Definition 1.2.11 and ff.]{winfried1998cohen}), and
  $$
  \grade(M):=\grade(\mathrm{Ann}_R(L),R)=\grade(I_{Y\mid X},R)=c.
  $$

  \item $\Ext^c_R(L,R)\simeq\Hom_T(L,T(c))$, which follows from noticing that $I_{Y\mid X}$ is generated by a regular sequence of linear forms of length $c$ and applying inductively the graded version of Rees' Theorem (see \cite[Theorem 8.34]{rotman}).
\end{itemize}

 From the previous remarks, we obtain the long exact sequence:
  \begin{equation}
     \label{generates degree -c}
  0 \to F_0^* \to \cdots \to F_{c-2}^* \xrightarrow{d_{c-1}^*} F_{c-1}^* \xrightarrow{\pi} M^* \to \Hom_T(L,T(c)) \to 0.
  \end{equation}

  Now comes the main point, namely that $\Hom_T(L,T)_{1}=0$. To see
  this, recall the isomorphism $\Hom_T(L,T)_{1} \simeq \Hom_Y(\cL,\cO_Y(1))$ and
  that $\cL$ is Ulrich on $Y$ as well as
  $\cHom_Y(\cL,\omega_Y)$. Then, by Serre duality and \cite[Proposition 2.1]{eisenbud-schreyer-weyman} we get:
  \[
  \Hom_Y(\cL,\omega_Y(m-c)) \simeq  \HH^{m-c}(Y,\cL(c-m))^*=0.
  \]
  Note that, by the assumption and Lemma \ref{omega-non-zero}, $\HH^0(Y,\omega_Y(m-c-1)) \ne 0$  and therefore we get
  an embedding:
  \[\cO_Y(1) \mono \omega_Y(m-c).\]
  Therefore,
  $\Hom_Y(\cL,\omega_Y(m-c)) = 0$ implies $\Hom_Y(\cL,\cO_Y(1))=0$.

  We have thus established that
  $\Hom_T(L,T(c))$ contains no element of degree $\le 1-c$. Also, we
  may write:
  \[
  F_{c-1}^* =
  R(c-1)^{\alpha_{c-1,c-1}} \oplus
  R(c)^{\alpha_{c-1,c}}\oplus \cdots
  \]
  Then, \eqref{generates degree -c} says that $F_{c-1}^*$ generates all
  the elements of $M^*$ of degree at most $1-c$, that is,
  the image of $\pi$ is the
  submodule $\langle M^*_{\le {1-c}} \rangle$ of $M^*$.
  This is clearly functorial, and the lemma is  proved.
\end{proof}

\begin{proof}[Proof of Theorem \ref{stable FF}]
Let $L$ and $N$ be two Ulrich modules over $T$. Our goal will be to
describe two mutually inverse maps:
\[
\Hom_T(L,N)_0 \leftrightarrows \SHom_R(\Om^c_R(L),\Om^c_R(N))_0.
\]

Set $M=\Om^c_R(L)$ and $P=\Om^c_R(N)$.
First, let $\varphi : L \to N$ belong to $\Hom_T(L,N)_0$.
Consider the minimal graded free resolutions of $L$ and $N$ over $R$
and choose a lifting of $\varphi$ to these resolutions:
\begin{equation}
  \label{lifting}
\xymatrix@C-3ex{
0  & \ar[l] L \ar^\varphi [d] & \ar[l] F_0 \ar^{\varphi_0}[d] & \ar[l]
\cdots & \ar[l] F_{c-1} \ar^{\varphi_{c-1}}[d] & \ar[l] M
\ar@{.>}^{\tilde \varphi}[d]\\
0  & \ar[l] N & \ar[l] G_0 & \ar[l] \cdots & \ar[l] G_{c-1}  & \ar[l] P
}
\end{equation}

The morphism $\tilde \varphi$ induced on the $c^\mathrm{th}$ syzygy modules
gives the class $\bar \varphi$ in $\SHom_R(M,P)_0$.
This does not depend on the choice of the lifting
$\varphi_i$, as any other choice would provide a map $\tilde \varphi'$
such that $\tilde \varphi - \tilde \varphi'$ factors through a free module.
\medskip

Conversely, given $\bar \psi \in \SHom_R(M,P)_0$, we
choose a representative $\psi : M \to P$ with dual $\psi^* : P^* \to M^*$.
Since $\psi^*$ is homogeneous of degree $0$, it maps elements of
degree at most $1-c$ in $P^*$ to elements of degree at most $1-c$ in $M^*$. By Lemma \ref{degree}
we obtain a diagram:
\begin{equation}
  \label{psi*}
\xymatrix@-2ex{
  0 \ar[r] &  \langle M^*_{\le {1-c}} \rangle \ar[r] &  M^* \ar[r]  &  \Hom_T(L,T(c)) \ar[r] & 0\\
  0 \ar[r] & \langle P^*_{\le {1-c}} \ar_-{\psi^*}[u] \rangle \ar[r] &
  P^* \ar[r] \ar_-{\psi^*}[u]  &  \Hom_T(N,T(c)) \ar_-{\hat \psi}@{.>}[u]\ar[r] & 0
}
\end{equation}

We wish to associate with $\bar \psi$ the morphism $\hat \psi^* : L
\to N$. To do this, we have to check that $\hat \psi$ does not depend
on the choice of the representative $\psi$ of $\bar \psi$. By
definition any other representative differs from $\psi$ by a map
$\zeta : M \to P$ that factors through a free module, which we call
$F$, which means $\zeta=\zeta_2 \zeta_1$ with $\zeta_1 : M \to F$ and
$\zeta_2 : F \to P$. Therefore $\zeta^*$ factors through $F^*$ and
again by Lemma
\ref{degree} we get the
commutative diagram:
\[
\xymatrix@-2ex{
  0 \ar[r] &  \langle M^*_{\le {1-c}} \rangle \ar[r] &  M^* \ar[r]  &  \Hom_T(L,T(c)) \ar[r] & 0\\
  0 \ar[r] & \langle F^*_{\le {1-c}} \rangle \ar_-{\zeta_1^*}[u] \ar[r] &
  F^*\ar_-{\zeta_1^*}[u]  \\
  0 \ar[r] & \langle P^*_{\le {1-c}} \rangle \ar_-{\zeta_2^*}[u]  \ar[r] &
  P^* \ar[r] \ar_-{\zeta_2^*}[u]  &  \Hom_T(N,T(c)) \ar^-{\hat \zeta}@{.>}[uu]\ar[r] & 0
}
\]
Call $G$ the quotient $F^*/\langle F^*_{\le {1-c}} \rangle$. The
diagram says that $\hat \zeta$ factors through $G$.

Now observe that $G$ is a free $R$-module. Indeed, any direct summand of
$F$ takes the form $R(a)$ for some $a \in \ZZ$, and:
\begin{equation}
  \label{free}
\langle R(a)_{\le {1-c}} \rangle = \left\{
  \begin{array}{ll}
    R(a), & \mbox{if $a \ge c-1$,} \\
    0, & \mbox{if $a< c-1$}.
  \end{array}
\right.
\end{equation}

Therefore $G$ is the direct sum of all summands $R(a)$ of $F^*$ with
$a<c-1$, hence $G$ is a graded free $R$-module. But $\Hom_T(N,T(c))$
is a torsion $R$-module.
So it admits no non-trivial morphism with target in $G$, and therefore $\hat \zeta=0$.
\medskip

Let us check now that these maps are mutually inverse.
Given $\varphi \in \Hom_T(L,N)_0$, we consider the representative
$\psi=\tilde \varphi$ of the class $\bar \varphi$. Dualizing
\eqref{lifting} we obtain a commutative diagram:
\[
\xymatrix@-2ex{
  \cdots \ar[r] &  F_{c-1}^* \ar[r] &  M^* \ar[r]  &  \Hom_T(L,T(c)) \ar[r] & 0\\
  \cdots \ar[r] &  G_{c-1}^* \ar_-{\varphi_{c-1}^*}[u] \ar[r] &
  P^* \ar[r] \ar_-{\psi^*}[u]  &  \Hom_T(N,T(c)) \ar_-{\varphi^*}@{.>}[u]\ar[r] & 0
}
\]
This diagram is the extension of \eqref{psi*} to a minimal resolution
of $\langle P^*_{\le {1-c}} \rangle$ and $\langle M^*_{\le {1-c}}
\rangle$.
This says that $\hat \psi = \varphi^*$, so $\hat \psi^* = \varphi$.
\medskip

Conversely, let $\psi$ be a representative of $\bar \psi \in
\SHom_R(M,P)_0$ and set $\varphi=\hat \psi^*$.
Let us lift the map $\langle P^*_{\le {1-c}} \rangle \to \langle M^*_{\le
  {1-c}} \rangle$ induced by $\psi$ to the minimal graded free
resolutions of these modules and dualize to obtain:
\[
\xymatrix@C-3ex{
0  & \ar[l] L \ar^\varphi [d] & \ar[l] F_0 \ar^{\psi_0}[d] & \ar[l]
\cdots & \ar[l] F_{c-1} \ar^{\psi_{c-1}}[d] & \ar[l] M
\ar^{\psi}[d] & \ar[l] 0\\
0  & \ar[l] N & \ar[l] G_0 & \ar[l] \cdots & \ar[l] G_{c-1}  & \ar[l] P&\ar[l] 0
}
\]

Then $\psi$ is induced by a lifting of $\varphi : L \to N$ to the
minimal resolutions of $L$ and $N$. Our proof is thus complete.
\end{proof}

We isolate the following consequence of Lemma \ref{degree}.

\begin{lem} \label{not free}
  If $L$ is an Ulrich module $L$ over $T$, then $\Om^c_R(L)$ has no
  free summands.
\end{lem}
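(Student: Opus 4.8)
The plan is to argue by contradiction, extracting all the structure I need from Lemma \ref{degree} together with the minimality of the resolution \eqref{reso}. Set $M=\Om^c_R(L)$ and suppose, for contradiction, that $M$ admits a free direct summand $R(a)$, with $g\in M$ a generator of this summand.

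First I would locate the degree of $g$. Since $M=\im(d_c)$ is a quotient of $F_c=\bigoplus_{j\ge c}R(-j)^{\alpha_{c,j}}$, it is generated in degrees $\ge c$; and since $g$ spans a free summand it is a minimal generator (so $g\notin \mathfrak{m}M$, where $\mathfrak{m}$ is the irrelevant maximal ideal), its degree $-a$ therefore satisfies $-a\ge c$, i.e. $a\le -c\le 1-c$. Dualizing, let $f\in M^*=\Hom_R(M,R)$ be the generator of the summand $R(-a)\subseteq M^*$, namely the composite of the projection $M\epi R(a)$ with the canonical map $R(a)\to R$ sending $g$ to $1$; then $f(g)=1$ and $\deg(f)=a\le 1-c$, so that $f\in\langle M^*_{\le 1-c}\rangle$.

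Next I would invoke Lemma \ref{degree}, which identifies $\langle M^*_{\le 1-c}\rangle$ with the image of the map $\pi:F_{c-1}^*\to M^*$ dual to the inclusion $\iota:M\mono F_{c-1}$. Thus $f=\pi(u)=u\circ\iota$ for some $u\in F_{c-1}^*$. The contradiction then comes from minimality: the entries of $d_c$ lie in $\mathfrak{m}$, so $\iota(g)\in\im(d_c)\subseteq\mathfrak{m}F_{c-1}$, whence $f(g)=u(\iota(g))\in\mathfrak{m}$, which is incompatible with $f(g)=1$.

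The only genuine input is the identification $\langle M^*_{\le 1-c}\rangle=\im(\pi)$ furnished by Lemma \ref{degree} — which is precisely where the Ulrich hypothesis on $L$ is used, through the vanishing $\Hom_Y(\cL,\cO_Y(1))=0$ — together with the observation that $\pi$ really is restriction along $\iota$; everything else is formal. Accordingly I expect no serious obstacle once Lemma \ref{degree} is in hand, the only point meriting care being the degree bookkeeping that places $f$ inside $\langle M^*_{\le 1-c}\rangle$.
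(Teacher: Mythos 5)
Your proof is correct and follows essentially the same route as the paper's: both reduce to the identification $\langle M^*_{\le 1-c}\rangle=\im(\pi)$ from Lemma \ref{degree} and then derive a contradiction with the minimality of $d_c$ (i.e.\ $\im(d_c)\subseteq\mathfrak{m}F_{c-1}$). The only, harmless, difference is how the dual generator of the putative free summand is placed inside $\langle M^*_{\le 1-c}\rangle$: you bound its degree directly using that $\im(d_c)$ is generated in degrees $\ge c$, whereas the paper invokes \eqref{free} together with the fact that the torsion module $\Hom_T(L,T(c))$ admits no nonzero free quotient.
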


\begin{proof}
  Suppose $\Om^c_R(L)=M\oplus F$, with $F$ a nonzero direct
  summand.
  Lemma \ref{degree} gives:
  \[
  0 \to
    \langle M _{\le 1-c} \rangle  \oplus
    \langle F _{\le 1-c} \rangle
  \xrightarrow{\pi}
    M
    \oplus
    F
  \to
  \Hom_T(L,T(c))
  \to 0,
  \]
  with $\pi$ block-diagonal. Then, \eqref{free} says that the restriction of
  $\pi$ is an
  isomorphism between $\langle F _{\le 1-c} \rangle$ and $F$, as
  $L$ has no free summand.
  Therefore, looking at \eqref{generates degree -c} we see that $d_c$
  is surjective onto $F$, which contradicts minimality of the
  resolution \eqref{reso}.
\end{proof}

\begin{eg}
  Let $X \subset \PP^{m+1}$ be a hypersurface of degree $d$ and $Y$ be
  a linear section of codimension $c$ of $X$. Based on the theory of matrix factorizations developed in \cite{eisenbud:homological},
  the resolution of an Ulrich module $L$ of rank $r$ on $T=\bk[Y]$ reads:
  \begin{equation}
    \label{resL}
  0 \leftarrow L \leftarrow T^{rd} \leftarrow T(-1)^{rd} \leftarrow
  T(-d)^{rd} \leftarrow \cdots
  \end{equation}
  Since $L(-d) \simeq \ker(T(-1)^{rd} \to T^{rd})$, this yields a
  resolution:
  \begin{equation}
    \label{resdualL}
    0 \leftarrow \Hom_T(L,T(d-1)) \leftarrow T^{rd} \leftarrow T(-1)^{rd} \leftarrow
  T(-d)^{rd} \leftarrow \cdots
  \end{equation}

  Combining \eqref{resL} with the Koszul resolution of $Y$ in $X$ we get a
  resolution over $R=\bk[X]$:
  \begin{equation}
    \label{resmix}
    0 \leftarrow L \leftarrow R^{rd} \leftarrow R(-1)^{rd(c+1)} \leftarrow
    R(-2)^{rd(c+\binom{c}{2})} \oplus R(-d)^{rd} \leftarrow \cdots
  \end{equation}
  The $k^\mathrm{th}$ term $F_k$ of this resolution looks as follows (here
  $\varepsilon \in \{0,1\}$):
  \[
  F_k = \bigoplus_{2h+\varepsilon + j = k}
  R(-(j+h d+\varepsilon))^{\binom{c}{j} rd}.
  \]

  Let $M=\Om^c_R(L)$. The resolution of the dualized syzygy
  $M^*$ starts with:
  \[
  \cdots \to R(c-d)^{rd(c+1)} \oplus F_{c-2}^* \to R(c-d+1)^{rd} \oplus F_{c-1}^* \to M^* \to 0.
  \]

  Now we may remove from this resolution the dual of the truncation at
  $M=\Om^c_R(L)$ of \eqref{resmix}, which is to say, by Lemma \ref{degree}, the resolution of $\langle M^*_{1-c} \rangle$.
  The residual strand recovers precisely \eqref{resdualL}, twisted by $R(c-d+1)$.
  The two strands of the resolution do not mix if $d > 2$.
\end{eg}

\begin{rmk}
  Theorem \ref{stable FF} is sharp, in the sense that it fails in
  general for ACM closed schemes
  $X\subset\PP^n$ of minimal degree. Take for instance
  $\cha(\bk)\ne 2$, choose a positive integer $k$ and let
  $X$ be a smooth quadric hypersurface in $\PP^{2k+1}$. Let $Y$ be
  a smooth hyperplane section of $X$.
  It is well-known that, over  $T=\bk[Y]$ there is a unique indecomposable
  ACM (and Ulrich) module $L$ with no free summands, namely the
  module associated with the spinor bundle.
  This module has rank $2^{k-1}$. On the other hand $R=\bk[X]$
  supports exactly two non-isomorphic ACM (and Ulrich) modules
  $F'$ and $F''$, which have both rank $2^{k-1}$.
  There is a short exact
  sequence
  \[
  0\la L \la R^{2^k}\la F'\oplus F''\la 0, \qquad \mbox{hence
    $\Omega_{R}^1(L)\simeq F' \oplus F''$}.
  \]

  Therefore, the functor $\SOm^1$ is not even a representation
  embedding as it sends indecomposable modules to decomposable
  ones. The condition of preserving non-isomorphy of modules also fails in general, as choosing $n=2k+2$ we get $L'$ and $L''$ non-isomorphic
  spinor modules on $T$, but both $\Omega_{R}^1(L')$ and
  $\Omega_{R}^1(L'')$ are isomorphic to the single spinor module on $X$.
\end{rmk}

For ACM
schemes of minimal degree,
even though Theorem \ref{stable FF} cannot be applied, the following proposition shows that the
syzygy functor enjoys a somehow opposite kind of nice feature, namely
it preserves the property of being Ulrich.

\begin{prop}
  Let $X\subset\PP^n$ be a variety of minimal degree $d=\deg(X)=n-m+1$
  and dimension $m\geq 1$. Let $Y\subset X$ be a general linear
  section of codimension  $c<m$. Then, for any Ulrich module $L$ over
  $T=\bk[Y]$, the $c^\mathrm{th}$ syzygy module $\Omega_R^c(L)$ is an Ulrich
  module over $R=\bk[X]$.
\end{prop}
\begin{proof}
  By induction, we can suppose that $c=1$. Therefore, given an Ulrich
  module $L$ over $T$, since we already know that $\Omega_R^1(L)$ is
  MCM over $R$, we just need to show that it is minimally generated by
  $d\rk(\Omega_R^1(L))$ elements. From
  \cite[Prop. 2.1]{eisenbud-schreyer-weyman}, we see that
  the beginning of the minimal resolution of $L$ over $T$ has the
  following form
  \[
  0\la L\la T^{rd}\la T(-1)^{rd(n-m)}\la\cdots
  \
  \]

Therefore, merging it with the minimal resolution of $T$ over $R$ we
obtain that the minimal resolution of $L$ over $R$ starts
  \[
  0\la L\la R^{rd}\stackrel{d_1}{\la} R(-1)^{rd^2}\la\cdots
  \
  \]
 Namely, $\Omega_R^1(L)=\im(d_1)$ is a MCM module over $R$ of
rank $rd$ generated by $rd^2$ elements of the same degree. In other
words, $\Omega_R^1(L)$ is Ulrich.
\end{proof}

\section{CM-wildness from syzygies of Ulrich extensions}

Let us fix the setup for this section. Let $X\subset \PP^n$ be an ACM
subscheme of dimension $m\geq 1$, put $R=\bk[X]$ and let $Y$ be a linear section of
$X$ of codimension $c<m$. Let
$\cA$ and $\cB$ be two simple Ulrich sheaves on
$Y$.
Set $W=\Ext^1_Y(\cB,\cA)$, $w= \dim_{\bk} W$. Here we want to prove
the following fundamental result.

\begin{THM} \label{embedding}
  Assume $w\ge 3$, $X\subset \PP^n$ is not of minimal degree and suppose that $\cA$ and $\cB$ satisfy hypothesis
  \ref{nonstrict} or \ref{strict} of Theorem \ref{itswild}. Then $X$ is of wild CM-type.
\end{THM}

Let us assume for the time being that $w \ne 0$ and write $\Upsilon=\Upsilon_w$. Over $Y \times \PP (W^*)$, there is
a universal extension:
\[
0 \to \cA \boxtimes \cO_{\PP (W^*)} \to \cU \to \cB \boxtimes \cO_{\PP (W^*)}(-1) \to 0.
\]

Take the sheafified minimal graded free resolutions of $\cA$ and $\cB$
as $\cO_X$-modules, pull-back via $p$ to $X \times \PP (W^*)$, and use the mapping
cone construction to build a minimal graded free resolution of $\cU$
over $X \times \PP (W^*)$:
\[
\xymatrix@-2ex{
& 0 & 0 & 0 \\
0 \ar[r] &  \cA \boxtimes \cO_{\PP (W^*)}\ar[u] \ar[r] &  \cU \ar[u]\ar[r] &  \cB\boxtimes \cO_{\PP (W^*)}(-1) \ar[r] \ar[u]&  0 \\
0 \ar[r] &  \cF_0 \ar[u] \boxtimes \cO_{\PP (W^*)} \ar[r] &  \cH_0  \ar^-{d_0}[u] \ar[r] &  \cG_0 \ar[u] \boxtimes \cO_{\PP (W^*)}(-1) \ar[r] &  0 \\
& \vdots \ar[u]& \vdots \ar[u]& \vdots \ar[u]\\
0 \ar[r] &  \cF_{c-1} \ar[u] \boxtimes \cO_{\PP (W^*)} \ar[r] &  \cH_{c-1}  \ar[u] \ar[r] &  \cG_{c-1} \ar[u] \boxtimes \cO_{\PP (W^*)}(-1) \ar[r] &  0 \\
0 \ar[r] &  \cF_{c} \ar[u] \boxtimes \cO_{\PP (W^*)} \ar[r] &  \cH_{c}  \ar^-{d_c}[u] \ar[r] &  \cG_{c} \ar[u] \boxtimes \cO_{\PP (W^*)}(-1) \ar[r] &  0 \\
& \vdots \ar[u]& \vdots \ar[u]& \vdots \ar[u]
}
\]
Here, $\cH_i=\cF_i\boxtimes \cO_{\PP (W^*)} \oplus \cG_i\boxtimes \cO_{\PP (W^*)}(-1)$.
Set:
\[
\cV = \im(d_c).
\]

Then we consider:
\[
\Phi_\cV=\bR p_* (q^*(-) \ts \cV) : \bD(\Coh_{\PP (W^*)}) \to \bD(\Coh_X).
\]

\begin{lem}
Let $T=\bk[Y]$, and set $L$ and $N$ for the modules of global sections of
$\cA$ and $\cB$. Let $\cM$ and $\cN$ be the sheafifications of
$\Om_R^c(L)$ and $\Om_R^c(N)$.
Then:
\[
\Phi_\cV(\cO_{\PP (W^*)}) \simeq \cM, \qquad \Phi_\cV(\Omega_{\PP (W^*)}(1)) \simeq \cN[-1].
\]
\end{lem}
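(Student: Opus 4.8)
The plan is to reduce the two identities to the cohomology computations already recorded for $\Phi_\cU$ on $\cO_{\PP W}$ and $\Omega_{\PP W}(1)$, once I extract the short exact sequence that $\cV$ fits into. Since the displayed resolution is a mapping cone, it carries a termwise split short exact sequence of complexes
\[
0 \to \cF_\bullet \boxtimes \cO_{\PP W} \to \cH_\bullet \to \cG_\bullet \boxtimes \cO_{\PP W}(-1) \to 0,
\]
whose sub and quotient are the pulled-back minimal $\cO_X$-resolutions of $\cA \boxtimes \cO_{\PP W}$ and $\cB \boxtimes \cO_{\PP W}(-1)$, and whose differential is upper triangular, $d_\bullet = \bigl(\begin{smallmatrix} \partial_\bullet^{\cF} & h_\bullet \\ 0 & \partial_\bullet^{\cG} \end{smallmatrix}\bigr)$. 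First I would take images of $d_c$ column by column and check that they again assemble into a short exact sequence
\[
0 \to p^*\cM \to \cV \to p^*\cN \ts q^*\cO_{\PP W}(-1) \to 0,
\]
which I will call $(\star)$; here I use that the sheafifications of $\im \partial_c^{\cF}$ and $\im \partial_c^{\cG}$ are exactly $\cM$ and $\cN$, by the definition of these as the sheafified $c$-th syzygies of $L$ and $N$ over $R$.

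The inclusion of $\im \partial_c^{\cF}$ as the first factor and the projection of $\cV \subseteq \cF_{c-1}\boxtimes\cO_{\PP W}\oplus\cG_{c-1}\boxtimes\cO_{\PP W}(-1)$ onto $\im \partial_c^{\cG}$ are visibly injective and surjective, so the only point needing an argument is that the kernel of this projection is precisely $\im \partial_c^{\cF}$, i.e.\ that $h_c y \in \im \partial_c^{\cF}$ whenever $\partial_c^{\cG} y = 0$. This follows from $d^2 = 0$, which gives the relation $\partial_c^{\cF} h_{c+1} + h_c \partial_{c+1}^{\cG} = 0$: as $\cG_\bullet$ is a resolution it is exact at the $c$-th spot, so $\partial_c^{\cG} y = 0$ allows me to write $y = \partial_{c+1}^{\cG} y'$, whence $h_c y = - \partial_c^{\cF} h_{c+1} y' \in \im \partial_c^{\cF}$. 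This is the one genuinely non-formal step, since passing to images of a differential is not exact in general; I expect it to be the main (and only) obstacle, and everything downstream is bookkeeping.

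With $(\star)$ in hand I would apply $\Phi_\cV = \bR p_*(q^*(-) \ts \cV)$, noting that $q^*\cO_{\PP W}$ and $q^*\Omega_{\PP W}(1)$ are locally free so the tensor is underived, and invoke the Künneth/projection formula $\bR p_*(q^*\cE \ts p^*\cK) \simeq \cK \ts_\bk \bR\Gamma(\PP W,\cE)$ for $\cK$ on $X$ and $\cE$ on $\PP W$. For $\Phi_\cV(\cO_{\PP W}) = \bR p_*\cV$, applying $\bR p_*$ to $(\star)$ and using $\bR\Gamma(\PP W,\cO_{\PP W}) = \bk$ and $\bR\Gamma(\PP W,\cO_{\PP W}(-1)) = 0$ kills the quotient term and leaves $\Phi_\cV(\cO_{\PP W}) \simeq \cM$. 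For $\Phi_\cV(\Omega_{\PP W}(1))$, I would tensor $(\star)$ with $q^*\Omega_{\PP W}(1)$ and apply $\bR p_*$: the sub-term dies because $\bR\Gamma(\PP W,\Omega_{\PP W}(1)) = 0$, while $\Omega_{\PP W}(1)\ts\cO_{\PP W}(-1) = \Omega_{\PP W}$ and $\bR\Gamma(\PP W,\Omega_{\PP W}) = \bk[-1]$ make the quotient contribute $\cN[-1]$, giving $\Phi_\cV(\Omega_{\PP W}(1)) \simeq \cN[-1]$. These are verbatim the Bott vanishings that made the analogous statement for $\Phi_\cU$ clear, now applied to $\cV$ in place of $\cU$ and to $\cM,\cN$ in place of $\cA,\cB$.
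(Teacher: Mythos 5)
Your proof is correct and complete. The paper offers no argument at all (it declares the lemma ``clear''), and what you write is precisely the intended justification: the horseshoe/mapping-cone structure gives the termwise split exact sequence of resolutions with upper-triangular differential, the one genuinely non-formal point (that $\im(d_c)$ again sits in a short exact sequence $0 \to \cM\boxtimes\cO_{\PP W} \to \cV \to \cN\boxtimes\cO_{\PP W}(-1)\to 0$) is correctly identified and settled via $d^2=0$ together with exactness of $\cG_\bullet$ at the $c$-th spot, and the rest is the same projection-formula/Bott-vanishing computation that the paper already invoked for $\Phi_\cU(\cO_{\PP W})\simeq\cA$ and $\Phi_\cU(\Omega_{\PP W}(1))\simeq\cB[-1]$.
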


\begin{proof}
By the diagram we have an exact sequence:
\[
0 \to \cM \boxtimes \cO_{\PP(W^*)} \to \cV \to \cN \boxtimes
\cO_{\PP(W^*)}(-1) \to 0,
\]
We get the conclusion  by using Künneth formula, see \cite[\S 3.3]{huybrechts:fourier-mukai},
and the vanishing of cohomology of $\cO_{\PP(W^*)}$ and
$\Omega_{\PP(W^*)}$ except in degree $0$ and $1$, respectively.
\end{proof}

Now consider the equivalence $\Xi$ of \eqref{Xi}.
Then the restriction of $\Phi_\cV \circ \Xi$ to
  $\Rep_\Upsilon$, composed with the global sections functor, gives an exact functor:
\[
\Psi_0 : \Rep_\Upsilon \to  \MCM_{R,0}.
\]
We denote by $\Psi$ the induced functor $\Rep_\Upsilon \to  \MCM_{R}$. Theorem \ref{embedding} amounts to the next result.

\begin{thm}
  If $X\subset \PP^n$ is not of minimal degree and $\cA$ and $\cB$ are
  Ulrich, then $\Psi : \Rep_\Upsilon \to  \MCM_R$ is a
  representation embedding.
  So if $\cA$ and $\cB$ satisfy hypothesis
  \ref{nonstrict} or \ref{strict} of Theorem \ref{itswild}
  and $w \ge 3$, $X$ is of wild CM representation type.
\end{thm}

\begin{proof}
  By construction we have the commutative diagram of functors:
  \[
  \xymatrix{
    \Rep_\Upsilon \ar^-{\Psi_0}[r] \ar^-\Phi[d] & \MCM_{R,0} \ar^-{\Pi}[d] \\
    \Ulr_{T,0} \ar^-{\SOm_R^c}[r] & \SMCM_{R,0}
  }
  \]

  We proved in Theorem \ref{stable FF} that $\SOm^c_R$ is fully
  faithful, and in Theorem \ref{strictly wild by extensions} that
  $\Phi$ is also fully faithful. So the same happens to $\SOm^c_R
  \circ \Phi$ and hence
  to $\Pi \circ \Psi_0$.

  Therefore, if $\cR$ and $\cS$ are two representations of $\Upsilon$
  such that $\Psi_0(\cR) \simeq \Psi_0(\cS)$,  we still have an isomorphism $\Pi (\Psi_0(\cR))
  \simeq \Pi (\Psi_0(\cS))$ and thus $\cR \simeq \cS$ by full
  faithfulness.

  Moreover, if $\Psi_0(\cR)$ is decomposable, then
  $\Hom_R(\Psi_0(\cR),\Psi_0(\cR))_0$ contains a non-trivial idempotent
  $\psi$.
  The class $\bar \psi$ is also an idempotent, which is trivial if and
  only if the summand of $\Psi_0(\cR)$ associated with $\psi$ is
  free. But this cannot happen by Lemma \ref{not free}. Also, again by full
  faithfulness of $\Pi \circ \Psi_0$, $\bar \psi$ corresponds to a
  non-trivial idempotent of $\cR$, so $\cR$ is also decomposable.
  This finishes the proof that $\Psi_0$ is a representation embedding.

  The consequence that $\Psi$ is also a representation embedding
  follows from the argument of Corollary \ref{corollario}.
  Therefore, $X$ of wild CM-type.
\end{proof}

\begin{eg}
  The first class of varieties where it is a priori unknown how to construct large
  families of ACM bundles is given by general cubic
  hypersurfaces of dimension $m \ge 4$. We can do this with Theorem
  \ref{embedding}. Indeed, start with a cubic hypersurface $X$ in
  $\PP^{m+1}$, sufficiently general to admit a smooth surface section
  $Y$. Then we may take $\cA = \cO_Y(A)$ and $\cB=\cO_Y(B)$, where $A$
  and $B$ are twisted cubics in $Y$ meeting at $5$ points,
  see \cite{dani:cubic:ja} : these will satisfy the assumptions of
  Theorem \ref{embedding}. Indeed, $\HH^0(\omega_Y(m-c-1))=\HH^0(\cO_Y)=\bk$ and, by Riemann-Roch:
  \[
  \dim_\bk\Ext^1(\cB,\cA)=-\chi(\cO_Y(A-B))=3
  \]
  In the next section we will see how to deal in a similar fashion
  with any variety besides the non-wild varieties listed in the main result.
\end{eg}

\section{Wildness of non-integral projective schemes}

In this short section we pay attention to the case of non-integral projective schemes. We first focus on the case of reducible
subschemes. For them, the next lemma is our starting point. In order to use it one starts with an ACM subscheme
$X$, recalls that $X$ is thus equidimensional, takes
$X_0$ to be a union of components of $X$, and studies the representation
type of $X$ in terms of that of $X_0$. Let us put $R=\bk[X]$ and $R_0=\bk[X_0]$.

\begin{lem} \label{one-piece}
  Let $X_0 \subset X\subset\PP^n$ be closed subschemes of the same
  dimension $m$
  and suppose that $X_0$ is CM-wild. Then $X$ is CM-wild.
\end{lem}
\begin{proof}
  The inclusion $X_0\subset X$ gives a surjective morphism of
  rings $R \to R_0$ that bestows a
  structure of $R$-module to any $R_0$-module. Because $X_0$
  and $X$ have the same dimension,
  any MCM $R_0$-module is
  also an MCM $R$-module. Non-isomorphic
  $R_0$-modules remain non-isomorphic $R$-modules. Also,
  an indecomposable $R_0$-module is indecomposable as
  $R$-module. In other words, we have a representation embedding
  $\MCM_{R_0} \to \MCM_R$, so the lemma is proved.
\end{proof}

As a consequence of the previous lemma, in order to classify
reducible projective schemes, it only remains to take care of
reducible varieties having no CM-wild component. This is the content
of the following result.

\begin{thm} \label{two-pieces}
  Let $X_1, X_2  \subset \PP^n$ be $m$-dimensional closed integral
  varieties with  $m \ge 2$. Assume that $X_1\cap X_2$ is a Weil divisor in
  $X_1$, that $X_2$ is ACM and that $X_1$ carries an ACM sheaf.
  Then $X_1\cup X_2$ is CM-wild.
\end{thm}
\begin{proof}
  Put $X=X_1 \cup X_2$ and $Y = X_1 \cap X_2$. The surjection $\cO_X
  \to \cO_Y$ factors as $\cO_X \to \cO_{X_1} \to \cO_Y$ and  $\cO_X
  \to \cO_{X_2} \to \cO_Y$ and, since $X=X_1 \cup X_2$, this induces
  an isomorphism of $\cO_X$-sheaves $\cI_{X_2|X} \simeq \cI_{Y|X_1}$.
  In other words, we have an exact sequence:
  \[
  0 \to \cI_{Y|X_1} \to \cO_X \to \cO_{X_2} \to 0.
  \]

  Let $\cF_1$ be an ACM sheaf on $X_1$. Note that
  $\cHom_X(\cO_{X_2},\cF_1)=\cHom_X(\cF_1,\cO_{X_2})=0$, because
  $\cF_1$ and $\cO_{X_2}$ are respectively supported on $X_1$ and $X_2$, and these
  varieties have no
  common component.
  So:
  \[\Hom_X(\cO_{X_2},\cF_1(q))=\Hom_X(\cF_1(q),\cO_{X_2})=0, \qquad
  \mbox{for all $q \in \ZZ$}.
  \]
  Also,
  $\cHom_X(\cO_{X},\cF_1) \simeq \cF_1$. So, applying
  $\cHom_X(-,\cF_1)$ to the previous exact sequence,  we get:
  \begin{equation}
    \label{Ext-OX2}
  0 \to \cF_1 \to \cHom_X(\cI_{Y|X_1},\cF_1) \to \cExt^1_X(\cO_{X_2},\cF_1).
  \end{equation}

  We look at $\cI_{Y|X_1}$ as the kernel of $\cO_{X_1} \to \cO_Y$.
  Applying $\cHom_{X_1}(-,\cF_1)$ gives:
  \begin{equation}
    \label{Ext-F1}
  0 \to \cF_1 \to \cHom_{X_1}(\cI_{Y|X_1},\cF_1) \to
  \cExt^1_{X_1}(\cI_{Y|X_1},\cF_1) \to 0,
  \end{equation}
  because $\cF_1$ is locally Cohen-Macaulay. On the other hand,
  applying $\cHom_{X_1}(-,\cO_{X_1})$ gives:
  \[
  0 \to \cO_{X_1} \to \cHom_{X_1}(\cI_{Y|X_1},\cO_{X_1}) \to \cN_{Y|X_1}
   \to 0,
  \]
  where we used the standard identification of the sheaf $\cExt^1_{X_1}(\cI_{Y|X_1},\cO_{X_1})$
  with the normal sheaf $\cN_{Y|X_1}$ of $Y$ in $X_1$.
  Tensoring the previous exact sequence with $\cF_1$ gives:
  \begin{equation}
    \label{normal-F1}
  \cdots \to \cF_1 \to \cHom_{X_1}(\cI_{Y|X_1},\cO_{X_1}) \otimes \cF_1 \to
  \cN_{Y|X_1} \otimes \cF_1 \to 0.
  \end{equation}

  Because $\cHom_{X}(\cI_{Y|X_1},\cF_1) \simeq
  \cHom_{X_1}(\cI_{Y|X_1},\cF_1)$, putting
  together  \eqref{Ext-OX2} and \eqref{Ext-F1} yields an inclusion $\cExt^1_{X_1}(\cI_{Y|X_1},\cF_1) \mono
  \cExt^1_X(\cO_{X_2},\cF_1)$.
  Therefore, for $q \in \NN$, we have a linear inclusion:
  \begin{equation}
    \label{growing}
  \HH^0(X_1,\cExt^1_{X_1}(\cI_{Y|X_1},\cF_1(q))) \mono \HH^0(X,\cExt^1_X(\cO_{X_2},\cF_1(q)) \simeq \Ext^1_X(\cO_{X_2},\cF_1(q)).
  \end{equation}
  where the isomorphism follows from the (degenerate)
  local-to-global spectral sequence.

  There is a dense open subset of the reduced structure over $Y$ where $Y$
  is Cartier and $\cF_1$ is locally free.
  Over such open set, the exact sequences \eqref{Ext-F1} and
  \eqref{normal-F1} become the same and $\cN_{Y|X_1} \otimes \cF_1$ is
  locally free of positive rank. Therefore,
  since $\dim(Y)=m-1 \ge 1$, the dimension of
  $\HH^0(Y,\cN_{Y|X_1} \otimes \cF_1(q))$ grows at least linearly when
  $q \gg 0$. Hence the same happens to $\HH^0(X_1,\cExt^1_{X_1}(\cI_{Y|X_1},\cF_1(q)))$.
  Therefore, in view of \eqref{growing}, $\Ext^1_X(\cO_{X_2},\cF_1(q))$ has unbounded dimension
  for growing $q$.
  The result now follows by applying item \ref{strict} of Theorem \ref{strictly wild by
  extensions} with $\cB=\cO_{X_2}$ and $\cA=\cF_1(q)$ for $q \gg 0$.
\end{proof}

We finish this section with a foray into non-reduced schemes.

\begin{thm} \label{non-reduced}
  Let $X \subset \PP^n$ be an $m$-dimensional closed subscheme containing a double
  structure over an integral ACM subscheme $X_0$ of dimension $m \ge 1$. Then $X$ is CM-wild.
\end{thm}

\begin{proof}
  This follows the same path as the previous lemma. We have an exact sequence:
  \[
  0 \to \cI_{X_0|X} \to \cO_X \to \cO_{X_0} \to 0.
  \]
  Since $X$ contains a double structure over $X_0$, the sheaf
  $\cI_{X_0|X}$ has rank at least one as a sheaf over $X_0$.
  Applying $\cHom_X(-,\cO_{X_0})$ to this sequence, we get an exact
  sequence:
  \[
  0 \to \cO_{X_0} \xrightarrow{\simeq} \cO_{X_0} \to
  \cHom_X(\cI_{X_0|X},\cO_{X_0}) \to \cExt^1_X(\cO_{X_0},\cO_{X_0}).
  \]
  Since the ideal sheaf $\cI_{X_0|X}$ has rank at least one, the sheaf $\cE
  = \cHom_X(\cI_{X_0|X},\cO_{X_0})$ is (torsion-free) of rank at least
  one over $X_0$ as well.
  Therefore, since $m\ge 1$, for $q \gg 0$, the dimension of $\HH^0(X,\cE(q))$ is
  unbounded, and thus also the dimension of $\HH^0(X,\cExt^1_X(\cO_{X_0},\cO_{X_0}(q)))$.

  We use now the exact sequence of lower
  degree terms of the local-to-global spectral sequence, together with
  the fact that $\cO_{X_0} \simeq \cHom_X(\cO_{X_0},\cO_{X_0})$. This
  gives an exact sequence:
  \[
  \Ext^1_X(\cO_{X_0},\cO_{X_0}(q)) \to
  \HH^0(X,\cExt^1_X(\cO_{X_0},\cO_{X_0}(q)) \to \HH^2(X,\cO_{X_0}(q))
  \]
  Now by Serre's vanishing $\HH^2(X,\cO_{X_0}(q))=0$ for $q \gg 0$, so
  the dimension of $\Ext^1_X(\cO_{X_0},\cO_{X_0}(q))$ is unbounded for
  $q\gg 0$. The conclusion again follows from item \ref{nonstrict} of Theorem \ref{strictly wild by
  extensions}, applied to $\cB=\cO_{X_0}$ and $\cA=\cO_{X_0}(q)$ with $q \gg 0$.
\end{proof}

\section{Varieties of minimal degree}
\label{minimal}

Assume $\bk$ is algebraically closed and $\cha(\bk)\ne 2$. Let $X \subset
\PP^n$ be a reduced closed ACM subscheme of dimension $m \ge 1$ and
degree $d$.
The subscheme $X$ is thus linearly normal and, without loss of generality, we may assume throughout that
$X$ spans $\PP^n$, so that $X$ is embedded by the complete linear
series of a very ample line bundle $\cO_X(1)$.
We argue on the {\it sectional genus} $p$ of
$X$, that is, the arithmetic genus of a reduced $1$-dimensional linear
section of $X$.
We also introduce the {\it $\Delta$-genus} of $X$, defined as
$\Delta(X)=d-n+m-1$.
Since $X$ is connected in codimension one (see \cite{hartshorne:connectedness} for
the definition and the result), a theorem of Xamb\'o
(see \cite{xambo:minimal}) generalizing the classical lower
bound on $d$
(see for example \cite[Corollary 5.13]{mumford:ag-I}), asserts that
 $\Delta(X)\geq 0$, or in other words $d \ge n-m+1$.
If equality is attained, $X$ is said to be of \textit{minimal
  degree}. This happens if and only if $\Delta(X)=0$, and also if and
only if $p=0$.

Before proceeding, a few words for the non-reduced case are in order: if $X$ is ACM but not
reduced, of dimension $m \ge 1$ and of degree $n-m+1$,
it still makes sense to ask about the representation type of $X$. In
this case, $X$ is a $2$-regular scheme (see {\cite{eisenbud-green-hulek-popescu} and references therein for this notion and related results) and therefore $X$ has a non-reduced irreducible component whose reduction $X_0$ is integral of minimal degree (see \cite[Corollary
0.8 and Theorem 0.4]{eisenbud-green-hulek-popescu}); therefore $X_0$ is ACM, in which case $X$ is CM-wild
by Theorem \ref{non-reduced}.

Let us return to the reduced case:  we can suppose $m \ge 2$, which is harmless since the representation type of curves is well-known,
see \cite{drozd-greuel,drozd-greuel:CM-type,bodnarchuk-burban-drozd-greuel}.
Here is the main result of this section.

\begin{thm} \label{main-minimal}
  Let $X$ be a non-degenerate ACM closed subscheme of minimal degree and dimension
  $m\ge 2$. Then $X$ is
  CM-wild, except if $X$ a linear space, a quadric hypersurface of corank at most $1$,
  the Veronese surface in $\PP^5$, or a smooth rational normal surface scroll of degree $3$ or $4$.
\end{thm}

By \cite[Theorem 0.4 and
1.4]{eisenbud-green-hulek-popescu} and \cite[Theorem
1]{xambo:minimal}, if the scheme $X$ is reducible, then it has at least two reduced irreducible components, both of
minimal degree, meeting along a divisor, so that $X$ is
CM-wild by Theorem \ref{two-pieces}.

Therefore it only remains to see what happens when $X$ is integral. In this case $X$ fits in the classification of del Pezzo and
Bertini, see \cite{eisenbud-harris:minimal-degree}.
 If $X$ is smooth,
then $X$ is either a quadric, or a Veronese surface, or a rational
normal scroll, and the representation type of all these varieties is
known.
Indeed, $X$ is of finite CM-type (see \cite{auslander-reiten:almost-split,eisenbud-herzog:CM}) if it is a linear space (see \cite{horrocks:punctured}), or a
smooth quadric (see \cite{knorrer:ACM}), or the Veronese surface
in $\PP^5$, or a smooth cubic scroll in
$\PP^4$ (see \cite{auslander-reiten}, see also \cite{faenzi:iyama-yoshino}), or a rational normal curve.
Also, $X$ is of tame CM-type if it is a
rational normal surface scroll of degree $4$, see \cite{faenzi-malaspina}.
Besides these cases, $X$ is strictly Ulrich wild, as we see by applying
Theorem \ref{strictly wild by extensions} to the Ulrich line bundles
considered in \cite{miro_roig:scrolls,faenzi-malaspina}. For the
reader's convenience (and because these sheaves will play a role
further on), we recall that, if the scroll $X  \subset \PP^n$
has degree $d$, the Ulrich line bundles are the ideal
of a fibre of the scroll twisted by
$\cO_{X}(1)$, and the dual of the ideal of $d-1$ fibres.

It remains to understand what happens when $X$ is an integral but singular scheme of minimal degree.
The goal of the rest of this section is to settle this point.

\begin{thm} \label{minimal degree}
  An integral, non-degenerate, singular variety $X$ of
  minimal degree and dimension $m \ge 2$ is of wild CM
  type unless $X$ is a quadric of corank $1$.
\end{thm}

According to del Pezzo and Bertini,
singular varieties of minimal degree are cones over smooth
varieties of minimal degree, so we start by studying in some detail the behavior of sheaves defined on cones.

\subsection{Extension of sheaves over cones} \label{extensions-cones}

Let $X \subset \PP^n$ be a closed non-degenerate subscheme. Fix a
linear subspace $\Lambda \subset \PP^n$. Let $\Lambda^0 \subset \PP^n$
be a linear subspace disjoint from $\Lambda$ such that $\Lambda$ and
$\Lambda^0$ span $\PP^n$, and consider a subscheme $X^0 \subset \Lambda^0$.
\begin{dfn}
  We say that $X$ is a \textit{cone} with vertex (or apex) $\Lambda$
  and base $X^0$ if $X$ is the union of all lines joining a point of
  $\Lambda$ and a point of $X^0$.
\end{dfn}

When $X$ is a cone with vertex $\Lambda \subset \PP^n$ of codimension $n_0+1$, any subspace
$\Lambda^0$ disjoint from $\Lambda$ and of dimension $n_0$ provides $X^0=X\cap \Lambda^0$ as base of
$X$.
To write the equations of a cone, choose coordinates so that $\Lambda^0$ is defined by the vanishing of the linear forms
$x_{n_0+1},\ldots,x_n$. We denote $\lambda_i=x_{n_0+1+i}$ the
coordinates of $\Lambda$ so that $\bk[\Lambda] \simeq
\bk[\lambda_0,\ldots,\lambda_{n-n_0-1}]$. Setting $S^0=\bk[x_0,\ldots,x_{n_0}]$,
the ideals $I_{X|S}$ of $X$ in $S$ and $I_{X^0|S^0}$ of $X^0$ in $S^0$
are generated by the same
minimal set of polynomials. Put $R=\bk[X]$ and $R^0=\bk[X^0]$. In terms of graded rings:
\[
R \simeq R^0 \otimes_{\bk} \bk[\Lambda].
\]

\begin{lem} \label{ext-i-tensor}
Given finitely generated $R^0$-modules $E^0$ and $F^0$, set $E =
E^0 \otimes_{R^0} R$ and $F =F^0 \otimes_{R^0} R$. Then, for all $i \ge 0$, we have an isomorphism of graded
$\bk[X]$-modules:
\[
\Ext^i_R(E,F) \simeq \Ext^i_{R^0}(E^0,F^0) \otimes \bk[\Lambda].
\]
\end{lem}

\begin{proof}
We have
\[
\Ext^i_R(E,F)\simeq\Ext^i_{R^0}(E^0,F^0 \otimes_{R^0} R)\simeq\Ext^i_{R^0}(E^0,F^0) \otimes_{R^0}\bk[X]
\]
 where the first isomorphism is \cite[Theorem 11.65]{rotman}, using
 that $R$ is a
  flat $R^0$-module. In order to finish, we need only to observe the standard isomorphism:
\[
\begin{split}
    \Ext^i_{R^0}(E^0,F^0) \otimes_{R^0}(R^0 \otimes_{\bk}
\bk[\Lambda]) & \simeq (\Ext^i_{R^0}(E^0,F^0) \otimes_{R^0} R^0) \otimes_{\bk}\bk[\Lambda] \\
     & \simeq\Ext^i_{R^0}(E^0,F^0) \otimes_{\bk} \bk[\Lambda].
\end{split}
\]
\end{proof}

\subsection{The cubic cone}

We focus now on the only case where all construction methods of
representation embeddings seen so far fail, for two reasons.
First, no extension group grows enough to use Theorem \ref{itswild}. Second,
all the ACM sheaves that do have deformations have
many endomorphisms, which is another obstruction to use Theorem \ref{itswild}.
We develop a specific technique to study this very intriguing case.
For this subsection, the field $\bk$ is arbitrary.

\begin{thm} \label{funnycase}
  The cone $X \subset \PP^4$ over a rational normal cubic curve
  in $\PP^3$ is CM-wild.
\end{thm}

\begin{proof}
  We divide the proof into eight steps.
  \begin{step} Define the sheaf $\cF^0$ on the twisted cubic and
    compute its self-extensions.
  \end{step}

  Let us write $X^0$ for the base of the cone $X$. Using the convention of \S
  \ref{extensions-cones} let us put $R=\bk[X]$, $R^0=\bk[X^0]$ so that $R \simeq \bR^0 \otimes
  \bk[\lambda_0]$, and let us abbreviate $\lambda=\lambda_0$.
  Define $\cF^0$ to be the line bundle of degree
  $2$ on $X^0$, that is,
  $\cF^0 \simeq \cO_{\PP^1}(2)$. This is a stable Ulrich sheaf on
  $X^0$. We write $F^0$ for its associated $\bk[X^0]$-module.
  Set $W$ for the 2-dimensional vector space
  $W=\Ext^1_{X^0}(\cF^0(1),\cF^0) \simeq \HH^1(\PP^1,\cO_{\PP^1}(-3))$.
  By Lemma \ref{Ext-Ext} with $\cE^0=\cF^0(s)$ for all $s \in \ZZ$ we get a graded isomorphism:
  \[
  \Ext^1_{R^0}(F^0(s)_{\ge 0},F^0)_{\ge 0} \simeq \bigoplus_{t
    \ge 0} \Ext^1_{X^0}(\cF^0(s),\cF^0(t)) \simeq \bigoplus_{0 \le t \le s-1} S^{3(s-t)-2}W,
  \]
  where by convention a symmetric power with negative exponent is zero.

  \begin{step} Define the sheaf $\cF$ on $X$ and
    compute its self-extensions.
  \end{step}

  We now pay attention to $X$.   Let $\cF$ be the ideal sheaf of a ray of
  the cone $X$, twisted by $\cO_X(1)$;
  this is a stable Ulrich sheaf of rank $1$ on $X$. The $R$-module of global sections $F$ associated with $\cF$ satisfies $F
  \simeq F^0 \otimes \bk[\lambda]$.
  By Lemma \ref{ext-i-tensor} we have, for all $q\in \ZZ$:
  \[
  \Ext^1_R(F(1),F)_q \simeq \Ext^1_{R^0}(F^0(1),F^0)
  \otimes \bk[\lambda]_q.
  \]
  Using Lemma
  \ref{Ext-Ext} and setting $\lambda^q=0$  by convention for $q<0$, we conclude, for all $q \in \ZZ$:
  \[
  \Ext^1_{X}(\cF(1),\cF(q)) \simeq  \Ext^1_R(F(1),F)_{q} \simeq
  W \cdot \lambda^q.
  \]

  \begin{step} \label{step3} Define the quiver $\Theta$ and a functor
    $\Rep_\Theta \to \ACM_X$.
  \end{step}

  We introduce now the quiver $\Theta$, which we depict as follows.
  \[
\begin{tikzpicture}[scale=1]
  \draw (-4.5,0) node [] {$\Theta$:};
  \draw (-3,0) node [above] {$ \mathbf{e_1}$};
  \draw (-1,0) node [above] {$ \mathbf{e_0}$};
  \draw (1,0) node [above] {$\mathbf{e_2}$};
  \node (1) at (-3,0) {$\bullet$};
  \node (0) at (-1,0) {$\bullet$};
  \node (2) at (1,0) {$\bullet$};
  \draw[->,>=latex] (0) to[out = -30,in=-150] (2);
  \draw[->,>=latex] (0) to[out = 30,in=150] (2) ;
  \draw[->,>=latex] (0) to[out=150,in=30] (1) ;
  \draw[->,>=latex] (0) to[out=-150,in=-30] (1) ;
    \end{tikzpicture}
  \]

  Let $\cR$ be a
  representation of $\Theta$ with dimension vector $(a_0,a_1,a_2)$, so
  that $\cR$ consists of two pencils of linear maps $A_1
  \leftleftarrows A_0  \rightrightarrows A_2$, where $A_i$ is a vector
  space of dimension $a_i$. As we did in \S
  \ref{kronecker-quiver} we associate a linear
   map with each of these two pencils by indexing the arrows of
   $\Theta$ with basis elements of $W$. This way, the datum or $\cR$
   is tantamount to:
   \[
   (\xi_1,\xi_2) \in \Hom_{\bk}(A_0,A_1) \otimes W \cdot \lambda^2 \times
   \Hom_{\bk}(A_0,A_2) \otimes W \cdot \lambda^3.
   \]
   In other words, we identify $\cR$ with:
   \[
   (\xi_1,\xi_2) \in \Ext^1_{X}(A_0 \otimes \cF, (A_1 \otimes \cF(1)) \oplus (A_2 \otimes
   \cF(2))).
   \]
   By the procedure described in \S \ref{extensions}, this gives
   a sheaf $\cE$ fitting into the exact sequence:
   \[0  \to
   \begin{array}[h]{c}
     A_1 \otimes \cF(1) \\
     \oplus \\
     A_2 \otimes \cF(2)
   \end{array}
   \to \cE \to A_0 \otimes \cF \to 0.
   \]

   The sheaf $\cE$ is thus clearly ACM and has a Jordan-Hölder filtration whose
   associated graded object is $\cF^{a_0} \oplus \cF(1)^{a_1} \oplus
   \cF(2)^{a_2}$. The procedure of constructing $\cE$ from $\xi$, or
   equivalently from $\cR$, is functorial, which can be seen readily
   following the proof of Theorem \ref{strictly wild by
     extensions}.

  \begin{step}
    Define a functor $\Phi : \FMod_{\bk[x_1,x_2]} \to \ACM_X$
    factoring through $\Rep_\Theta \to \ACM_X$.
  \end{step}

  Put $\Sigma=\bk[x_1,x_2]$. We define a functor from
  $\FMod_\Sigma$ towards the category of ACM sheaves over $X$.
  Choose a basis $(w_0,w_1)$ of $W$.
  A finite-dimensional $\Sigma$-module
  is a finite dimensional vector space $M$ together with two
  commuting endomorphisms $\sx_1$ and $\sx_2$.
  We define two
  representations $\xi_1$ and $\xi_2$ by setting
  $A_0=M \oplus M$, $A_1=A_2=M$ and:
  \[
  \xi_1 = (\id_M w_0 + \sx_1 w_1,0), \qquad  \xi_2 = (0,\id_M w_0 + \sx_2 w_1).
  \]
  The sheaf $\cE$ associated with $M$ is defined by the pair
  $\xi=(\xi_1,\xi_2)$ as in Step \ref{step3}.

  \begin{step}
    Prove that, if $\cE=\Phi(M)$ and $\cE'=\Phi(M')$ are
    isomorphic, then
    $M \simeq M'$ as $\bk[x_1]$-modules.
  \end{step}

  Given two finite-dimensional $\Sigma$-modules $M$ and
  $M'$, we have two sheaves $\cE$ and
  $\cE'$. Assume that these sheaves are isomorphic.
  By
  the Harder-Narasimhan filtrations of $\cE \simeq \cE'$ we get that $\cF(2)^{a_2}$ is a
  maximal destabilizing subsheaf of both $\cE$ and $\cE'$, which
  implies that the quotient sheaves $\cE_1=\cE/\cF(2)^{a_2}$ and
  $\cE_1'=\cE'/\cF(2)^{a_2}$ are isomorphic. These sheaves are given
  by the extension classes $\xi_1$ and $\xi_1'$, which are thus
  isomorphic by the argument we used in the proof of Theorem \ref{itswild}.

  We identify $M$ and $M'$ as vector spaces and consider $\sx_1$ and
  $\sx_1'$ as endomorphisms of $M$. Recall the
  expressions $\xi_1=(\id_M w_0 + \sx_1 w_1,0)$ and $\xi'_1=(\id_M w_0 + \sx'_1 w_1,0)$.
  Because $\xi_1$ and $\xi'_1$ are isomorphic, there are linear isomorphisms $\alpha_0
  \in \End_\bk(M\oplus M)$, $\alpha_1 \in \End_\bk(M)$ such
  that $\alpha_1 \otimes
  \id_W \circ \xi_1' = \xi_1 \circ \alpha_0$.
  Decomposing $\alpha_0$ as a block matrix of endomorphisms $\alpha_0^{i,j}$ of $M$, we
  rewrite this as
  \[
  \alpha_1 (\id_M w_0 + \sx_1' w_1,0)= (\id_M w_0 + \sx_1 w_1,0) \begin{pmatrix}
    \alpha_0^{1,1} &     \alpha_0^{1,2} \\
    \alpha_0^{2,1} &     \alpha_0^{2,2}
  \end{pmatrix}
  \]
  In particular we get:
  \[
  \alpha_1 w_0 + \alpha_1 \sx_1' w_1 = \alpha_0^{1,1} w_0 + \sx_1
  \alpha_0^{1,1} w_1,
  \]
  so $\alpha_1=\alpha_0^{1,1}$ and  $\alpha_1$ conjugates
  ${\sf x}_1'$ to  ${\sf x}_1$. Then $M$ and $M'$ are isomorphic as
  $\bk[x_1]$-modules.

  \begin{step}
     Prove that, if the sheaves $\cE =\Phi(M)$ and $\cE' = \Phi(M')$ are
     isomorphic, then  $M \simeq M'$.
  \end{step}

  Suppose again $\cE \simeq \cE'$ and
  assume now $\xi_1=\xi_1'$, which can be achieved after linear
  automorphisms by the previous point.

  By
  definition,  $\cE$
  and $\cE'$ are obtained from $\cE_1=\cE_{\xi_1}$ by
  using $\xi_2$ and $\xi_2'$, which are both linear maps $A_0
  \to A_2 \otimes W$. Let us look more closely at how this is achieved.
  Start with the extension $\xi_1$ and the sheaf $\cE_1$ fitting into:
  \[
  (\xi_1) \qquad 0 \to A_1 \otimes \cF(1) \to \cE_1 \to A_0 \otimes
  \cF \to 0.
  \]
  Apply to this the functor $\Hom_X(-,A_2\otimes \cF(2))$. We get:
  \begin{equation}
    \label{describeExt}
  \Hom_X(A_1\otimes \cF(1),A_2\otimes \cF(2)) \to
  \Ext^1_X(A_0 \otimes \cF,A_2\otimes \cF(2)) \to
  \Ext^1_X(\cE_1, A_2\otimes \cF(2)).
  \end{equation}
  This is rewritten in the form:
  \begin{equation}
    \label{describeExt2}
  \Hom_{\bk}(A_1,A_2) \cdot \lambda \to \Hom_{\bk}(A_0,A_1) \otimes W
  \cdot \lambda^3 \to   \Ext^1_X(\cE_1, A_2\otimes \cF(2)).
  \end{equation}
  Given a linear map $\gamma : A_1 \to A_2$, the leftmost map in the
  previous diagram sends $\gamma \cdot \lambda$ to $\gamma \otimes
  \id_W \circ \xi_1 \cdot \lambda^3$.
  The isomorphism class of the sheaf $\cE$ determines an element of
  $\Ext_X^1(\cE_1,A_2
  \otimes \cF(2))$ coming from a pair $(\xi_1,\xi_2)$, so the previous
  diagram shows that $\cE$ determines the isomorphism class of $\xi_2$
  up to adding any map of the form $\gamma \otimes
  \id_W \circ \xi_1$, for $\gamma \in \Hom_{\bk}(A_1,A_2)$.

  This says that an isomorphism $\cE' \to \cE$ exists if and only if
  there exist a linear isomorphism $\alpha_2\in \End(A_2)$ and a linear
  map $\gamma : A_1 \to A_2$ such that $\xi_2$ is carried to $\xi_2'$
  by $\alpha_2 + \gamma \otimes \id_W$, after composition with
  $\xi_1$, that is:
  \begin{equation}
    \label{repres}
  \alpha_2 \otimes \id_W \circ \xi_2' = \xi_2 + \gamma \otimes \id_W
  \circ \xi_1.    
  \end{equation}

The sheaves
  $\cE$ and $\cE'$  determine isomorphic
  extensions and therefore equal elements in the group $\Ext_X^1(\cE_1,A_2
  \otimes \cF(2))$. This implies that there exist a
  linear isomorphism $\alpha_2 \in \End_\bk(M)$ and a linear map
  $\gamma : A_1 \to 
  A_2$ such that \eqref{repres} holds.
  According to the decomposition $A_0=M \oplus M$, we rewrite this as:
  \[
  (0,\alpha_2 w_0 + \alpha_2 \sx_2'w_1)=(0,\id_M w_0 +
  \sx_2w_1)+(\gamma w_0+\gamma \sx_1 w_1,0).
  \]
  In particular we obtain $\alpha_2=\id_M$ and $\sx_2'=\sx_2$. The modules $M$
  and $M'$ are thus isomorphic.

  \begin{step}
    Prove that, if $\cE=\Phi(M)$ is decomposable, then $M$
    is decomposable as $\bk[x_1]$-module.
  \end{step}
    We
  write $\cE = \cE' \oplus \cE''$. The Harder-Narasimhan filtration of
  $\cE'$ must be compatible with that of $\cE$ and therefore its
  associated graded object must be $\cF^{a'_0} \oplus \cF(1)^{a'_1} \oplus
   \cF(2)^{a'_2}$, and similarly for $\cE''$.
   Then, the quotient $\cE_1'=\cE'/\cF(2)^{a'_2}$ has a graded object
   of the form $\cF(1)^{a'_1} \oplus \cF(2)^{a'_2}$ and therefore,
   as in the proof of Theorem \ref{itswild}, we must have $\cE_1' \simeq
   \cE_{\xi_1'}$ for some $\xi_1' \in \Hom_\bk(A_0',A_1') \otimes W$,
   where $A'_0$ and $A'_1$ are vector spaces of dimension $a_0$ and
   $a_1$ appearing in the vector space decompositions $A_0=A_0'\oplus A_0''$ and
   $A_1=A_1'\oplus A_1''$.
   Likewise we have $\cE_1'' \simeq
   \cE_{\xi_1''}$ for some $\xi_1'' \in \Hom_\bk(A_0'',A_1'') \otimes
   W$ and $\cE_1 = \cE_1' \oplus \cE_1''$, which implies that $\xi_1$
   has a block-diagonal form in terms of $\xi_1'$ and $\xi_1''$.

   Now we have $M=A_1$ decomposed as vector space as $A_1' \oplus
   A_1''$. We put $M'=A_1'$, $M''=A_1''$ and we write $M=M'\oplus
   M''$, so we decompose $A_0=M \oplus M$ as $A_0=M' \oplus M'' \oplus
   M' \oplus M''$.
   By definition we have $\xi_1 = (\id_M w_0+\sx_1 w_1,0)$ so the expression
   $\id_M w_0+\sx_1 w_1$ gives a map $M' \oplus M'' \to (M' \oplus
   M'')\otimes W$ which, in view of the decomposition of $\xi_1$ in
   diagonal form in terms of $\xi_1'$ and $\xi_1''$, takes the form:
   \[
   \id_M w_0+\sx_1 w_1 =
   \begin{pmatrix}
     \id_{M'}w_0 +\sx'_1w_1  &  0 \\
     0 & \id_{M''}w_0 +\sx''_1w_1
   \end{pmatrix},
   \]
   for some linear maps $\sx'_1 : M' \to M'$ and $\sx''_1 : M'' \to
   M''$.

   \begin{step}
     Prove that, if $\cE=\Phi(M)$ is decomposable, then $M$
    is decomposable.
   \end{step}

   We proved that $M=M' \oplus M''$ as $\bk[x_1]$-module. Now we have
   to use $\xi_2$ to prove that the splitting  $\cE =
   \cE' \oplus \cE'$  provides a second pair of endomorphisms $\sx_2'
   : M' \to M'$ and $\sx_2'' : M'' \to M''$ compatible with the decomposition $M = M'
   \oplus M''$ induced by $\cE_1 = \cE_1' \oplus \cE_1''$. Again the
   Harder-Narasimhan filtration induces a decomposition $A_2 = A_2'
   \oplus A_2''$. The exact sequence defining $\cE$ as an extension of
   $\cF_1$ by $A_2 \otimes \cF(2)$ together with the direct sum
   decompositions of $\cE_1$ and $A_2$ provides elements $\zeta'$ and
   $\zeta''$ of the Ext groups:
   \[
   \zeta' \in \Ext^1_X(\cE_1',A_2' \otimes \cF(2)), \qquad    \zeta''
   \in \Ext^1_X(\cE_1'',A_2'' \otimes \cF(2)),
   \]
   and the extension providing $\cE$ is the block-diagonal sum of
   $\zeta'$ and $\zeta''$.
   This means that, denoting by $\iota : \cE_1' \to \cE_1$ the obvious
   injection, the map
   \[
   \iota ^* : \Ext^1_X(\cE_1,(A_2' \oplus A_2'') \otimes \cF(2)) \to
   \Ext^1_X(\cE_1',(A_2' \oplus A_2'') \otimes \cF(2))
   \]
   must map the class $\zeta$ of $\cE$ to $(\zeta',0)$, and similarly
   the map induced by the injection $\cE_1'' \to \cE_1$ must map
   $\zeta$ to $(0,\zeta'')$.

   In view of the description of the Ext groups we have given in
   \eqref{describeExt} and    \eqref{describeExt2}, and because
   $\iota$ corresponds to the inclusion of $M'$ into $M$, this implies
   that, up to adding $\gamma \otimes \id_W
  \circ \xi_1$ for some $\gamma : M' \to A_2' \oplus A_2''$, the map
   \[
   \xi_2 \circ \iota : M' \oplus M' \to (A_2' \oplus A_2'') \otimes W
   \]
   must have a vanishing component in $A_2'' \otimes W$.
   Write
   $\gamma$ as the transpose of $(\gamma',\gamma'')$, where $\gamma'
   \in \Hom_\bk(M',A_2')$ and  $\gamma''
   \in \Hom_\bk(M',A_2'')$.

   Denote by $\pi' : M \to A_2'$
   and $\pi'' : M \to A_2''$ the obvious projections and
   recall that, by definition, we have $\xi_2=(0,\id_M w_0 + \sx_2 w_1)$ and
   $\xi_1=(\id_M w_0 + \sx_1 w_1,0)$.
   Evaluating at
   $(w_0,w_1)=(1,0)$, we get maps $M' \oplus
   M' \to A_2''$ satisfying the following equality:
   \[
   (\gamma'' \circ \id_{M'},0) = (0,\pi'' \circ \iota).
   \]
   In particular $M' = \im(\iota) \subset \ker(\pi'')=A_2'$. Similarly
   we get
   $M'' \subset \ker(\pi')=A_2''$ and in view of the equalities $M' \oplus M'' = M =
   A_2'\oplus A_2''$, we obtain $M'=A_2'$ and $M''=A_2''$.

   Evaluating at $(w_0,w_1)=(0,1)$ we get $\pi_2'' \circ \sx_2 \circ
   \iota=0$, meaning that $\sx_2$ maps $M'=\im(\iota)$ to
   $M'=\ker(\pi'')$, namely $M'$ is stable for $\sx_2$.
   One proves similarly the same statement for $M''$.
   Therefore
   $\sx_2 : M \to M$ is the block-diagonal sum of $\sx_2' : M' \to M'$
   and $\sx_2'' : M'' \to M''$ so that $M$ is the direct sum of $M'$
   and $M''$ as $\Sigma$-modules.

   Note that, if the decomposition of $\cE$ is non-trivial (that is to
   say, if  $\cE' \ne 0 \ne \cE''$) then at least one of the decompositions of
   $A_0$, $A_1$ and $A_2$ is non-trivial, and
   therefore all of them are by what we have just seen, so the
   decomposition of $M$ as $\Sigma$-module is non-trivial too.
\end{proof}

\subsection{Proof of Theorem \ref{minimal degree}}

  The variety $X$ is a cone over a base $X^0$ which is a smooth
  irreducible variety of minimal degree. We adopt the convention of
  \S
  \ref{extensions-cones} and write $R=\bk[X]$ and $R^0=\bk[X^0]$ so $R
  \simeq R^0 \otimes
  \bk[\Lambda]$.
  We always put a $0$ superscript to sheaves on $X^0$ and modules on
  $R^0$, remove the superscript to indicate the corresponding
  object on $X$, and put a calligraphic letter for the coherent sheaf
  associated with a module denoted by that letter.

  We have three cases to check according to whether $X^0$ is a
  quadric, or a Veronese surface in $\PP^5$, or a scroll.
  These are treated in a conceptually unified way by Lemma \ref{ext-i-tensor},
  Lemma \ref{Ext-Ext} and Theorem \ref{strictly wild by extensions}:
  only the choice of the basic sheaves on $X^0$ obliges us to separate them.
  Note that, once the ACM (or Ulrich) sheaves $\cE^0$ and $\cF^0$ are stable on $X^0$,
  their lifts $\cE$ and $\cF$ are ACM (or Ulrich) and stable on
  $X$. Indeed, the ACM and Ulrich conditions are obvious as they can
  be read on the minimal graded free resolutions of $E^0$ and $E$ or
  of $F^0$ and $F$, which are unchanged on $S$ or $S^0$. Stability is
  also clear, as any destabilizing
  subsheaf, restricted to a generic linear space of dimension $n_0$,
  would destabilize $\cE^0$ or $\cF^0$.
  \smallskip

\subsubsection{Quadric cones}
\label{quadrics}

Here, $X^0$ is a quadric of corank greater than one,
in other words $\dim(\Lambda) \ge 1$.
We take $\cS^0$ to be a spinor bundle on $X^0$,
see \cite{ottaviani:spinor, buchweitz-greuel-schreyer, knorrer:ACM}.
Then, $\cE^0=\cS^0(1)$ is an Ulrich bundle and it sits in a short exact sequence
\[
0\to \cF^{0}\to\cO_{X^0}^{2\rk \cS^0}\to \cE^0\to 0,
\]
where $\cF^{0}$ is again a spinor bundle (isomorphic to $\cS^0$
if and only if the dimension of $X^0$ is odd).
Both are stable sheaves on $X^0$. We have, by Lemma \ref{Ext-Ext}:
\[
\bigoplus_{t \in \NN} \Ext^1_{X^0}(\cE^0,\cF^{0}(t)) \simeq
\Ext^1_{R^0}(E^0,F^{0}) \simeq \bk,
\]
  where $E^0 = \Gamma_*(\cE^0)$ and $F^0 = \Gamma_*(\cF^0)$.
  Therefore, defining $\cE$ and $\cF$ as sheafifications of $E=E^0
  \otimes_\bk \bk[\Lambda]$ and $F=F^0
  \otimes_\bk\bk[\Lambda]$, by Lemma \ref{ext-i-tensor}, we obtain an
  isomorphism of $R$-modules:
  \[
  \bigoplus_{t \in \NN} \Ext^1_{X}(\cE,\cF(t)) \simeq \bk[\Lambda].
  \]

  In particular the component of degree $t$, for $t \ge 2$, of this
  extension space has dimension at least $3$.
  Applying item \ref{nonstrict} of Theorem \ref{strictly wild by
    extensions} to  $\cB=\cE$ and $\cA=\cF(2)$ gives the result.

  \smallskip
\subsubsection{Cones over the Veronese surface}

If $X^0$ is the Veronese image of $\PP^2$ in $\PP^5$, we take $\cE^0$ to be the tangent bundle $T_{\PP^2}$. This is a stable Ulrich bundle with respect to $\cO_{X^0}(1)\simeq\cO_{\PP^2}(2)$. Write $W$ for the 3-dimensional space
  $\HH^0(\PP^2,\cO_{\PP^2}(1))$.
  Letting  $\Omega_{\PP^2}$ be the cotangent bundle on $\PP^2$ and
  tensoring the Euler sequence with $\Omega_{\PP^2}(t)$ we get, for $t=0$:
  \[
  \HH^1(\PP^2,\Omega_{\PP^2} \otimes T_{\PP^2}(-2)) \simeq
  \HH^2(\PP^2,\Omega_{\PP^2}(-2))  \simeq W.
  \]
  In general for $t \in \ZZ$, we obtain:
  \begin{equation*}
   \Ext^1_{X^0}(\cE^0(1),\cE^0(t)) \simeq
    \begin{cases}
      W & \mbox{if $t=0$},\\
      0 & \mbox{otherwise}.
    \end{cases}
  \end{equation*}

  Therefore we have isomorphisms of graded $R$-modules:
  \[
  \bigoplus_{t \in \NN} \Ext^1_{X}(\cE(1),\cE(t))
  \simeq   \Ext^1_{X^0}(\cE^0(1),\cE^0) \otimes
  _{\bk}\bk[\Lambda] \simeq W \otimes
  _{\bk}\bk[\Lambda].
  \]
  Applying item \ref{nonstrict} of Theorem \ref{strictly wild by
    extensions} to the pair of sheaves $\cB=\cE(1)$ and $\cA=\cE(2)$ gives the result.

  \smallskip
\subsubsection{Cones over scrolls}  Finally, assume that $X^0$ is a scroll of degree
  $d$. By Theorem \ref{funnycase} we may suppose $d \ge 4$ or $m
  \ge 3$.
  If $m=2$ (and hence $X^0 \simeq \PP^1$) we work like in Theorem \ref{funnycase} and take $\cF$
  to be the ideal sheaf of a ray of
  the cone $X$. We have $\cF^0 \simeq \cO_{\PP^1}(-1)$. The vector space
 $$W=\Ext^1_{X^0}(\cF^0(1),\cF^0) \simeq \HH^1(\PP^1,\cO_{\PP^1}(-d))$$ has
  dimension $d-1 \ge 3$. Moreover, by Lemma \ref{Ext-Ext}, Case \ref{if both Ulrich}, we get that $\Ext^1_{R^0}(F^0(1),F)_0\simeq W$. Then, we are in position to apply Theorem \ref{strictly wild by
    extensions} because, for all $q \in \NN$, the following space has dimension at least $3$:
  \[
  \Ext^1_{X}(\cF(1),\cF(q)) \simeq  \Ext^1_R(F(1),F)_{q} \simeq
  W \cdot \lambda_0^q.
  \]

  If $m \ge 3$, either $\dim(\Lambda)\ge 1$ or $\dim(X^0) \ge 2$.
  In the former case, we may assume $\dim(X^0)=1$ so again $X^0 \simeq \PP^1$ and we choose $\cF^0$
  as before. The space $W=\Ext^1_{X^0}(\cF^0(1),\cF^0)$ has positive
  dimension and we get, for $q \ge 0$:
  \[
  \Ext^1_{X}(\cF(1),\cF(q)) \simeq  \Ext^1_R(F(1),F)_{q} \simeq
  W \cdot \bk[\Lambda]_{q},
  \]
  and this space has unbounded dimension for $q \gg 0$ as
  $\dim(\Lambda) \ge 1$.
  In the latter case, we may assume $\dim(\Lambda)=0$, and choose
  $\cF^0$ to be the ideal of a fibre of the scroll, twisted by
  $\cO_{X^0}(1)$, and $\cE^0$ to be the line bundle associated with the divisor of
  $d-1$ fibres. These are both (obviously stable) Ulrich line bundles. Also,
  $W_{-1}:=\Ext^1_{X^0}(\cE^0(1),\cF^0)\simeq\HH^1(\cO_{\PP^1}(-d))\simeq
  \bk^{d-1}$, while $W_0:=\Ext^1_{X^0}(\cE^0,\cF^0)$ has dimension at least
  1, see \cite[Lemma 3.1]{miro_roig:scrolls}.
  Therefore, we get:
  \[
  \Ext^1_{X}(\cE,\cF) \simeq \Ext^1_{X}(E,F)_0 \simeq
  \Ext^1_{X^0}(E^0,F^0)_{-1} \cdot \lambda_0 \oplus
  \Ext^1_{X^0}(E^0,F^0)_0 \simeq W_{-1} \cdot \lambda_0 \oplus W_0.
  \]
  So $\dim_{\bk}\Ext^1_{X}(\cE,\cF) \ge 3$ and
   Theorem \ref{strictly wild by
    extensions} allows us to conclude.

  \begin{rmk}
    \label{char2}
    If the base field $\bk$ is algebraically closed and $\cha(\bk)=2$, all the
    statements of Theorem \ref{minimal degree} and hence of Theorem
    \ref{main-minimal} remain true, except perhaps the fact that
    quadric cones of corank one are CM-countable.

    All the proofs remain the same except when $X$ is a quadric
    hypersurface. If the quadric $X$ is smooth, then $X$ is CM-finite
    by  \cite{buchweitz-eisenbud-herzog}. If $X$ is not smooth, then
    $X$ is a cone over a smooth quadric $X^0$, the vertex of the cone being
    a linear subspace $\Lambda \subset \PP^n$ (recall that $\bk$ is algebraically
    closed). If $\dim(\Lambda)\ge 1$, again
    \cite{buchweitz-eisenbud-herzog} provides the sheaves $\cE^0$ and
    $\cF^0$ as is \S \ref{quadrics}, and these sheaves are Ulrich,
    hence semistable, as we shall see in Lemma \ref{ulrich semistable}.
    Since any destabilizing subsheaf should be Ulrich, CM-finiteness
    of $X^0$ implies that $\cE^0$ and $\cF^0$ can be chosen to be
    stable and hence simple. So $X$ is CM-wild in this case as in \S
    \ref{quadrics}.

    We do not know if a quadric cone $X$ of corank $1$ is
    CM-countable when $\cha(\bk)=2$. Indeed, our construction provides
    countably many ACM sheaves on $X$ also in this case, but Knörrer
    periodicity does not apply directly to show that these are the
    only indecomposable ACM sheaves on $X$ up to isomorphism.
\end{rmk}

\section{Varieties of almost minimal degree}
\label{section almost minimal}

From now on the field $\bk$ is algebraically closed of arbitrary
characteristic.
Let us take a further step in the proof of our main result. In view of
Theorem \ref{non-reduced}, we will assume from now on that the
subscheme $X \subset \PP^n$ is reduced, and keep the usual assumption
that $X$ is closed, non-degenerate and ACM of dimension $m\ge 1$.
In this section we pay attention to varieties of \textit{almost minimal degree}, namely the degree of
$X$ is $d=n-m+2$.
If $X$ is irreducible and normal
then $X$ is usually called a del Pezzo variety (terminology
may differ slightly in the literature).
Our goal in this section is to prove the following result.

\begin{thm} \label{resumealmost}
  Any reduced, non-degenerate ACM scheme $X\subset\PP^n$ of dimension
  $m\geq 2$ and almost minimal degree is of wild CM-type. For $m=2$,
  $X$ is strictly Ulrich wild.
\end{thm}

Let us first look briefly at reducible (and reduced) ACM subschemes of
almost minimal degree and then focus on the irreducible ones, normal or not.

\subsection{Reducible subschemes of almost minimal degree}

Let us start by assuming that $X$ is reducible, namely $X=X_1\cup X_2$
where $X_1$ and $X_2$ are closed subschemes of $\PP^n$ of degree $d_1$
and $d_2$.  We claim that not both of them have degree $d_i\geq
n_i-m+2$ at the same time, where $n_i$ denotes the dimension of the linear span
$\langle X_i\rangle$ of $X_i$ inside $\PP^n$. Otherwise,
$$
n-m+2=d=d_1+d_2\geq n_1+n_2-2m+4.
$$
On the other hand $n=n_1+n_2-l$, with $l=\dim(\langle X_1\rangle\cap\langle X_2\rangle)$. From here we would
obtain that $l\leq m-2$,
in contradiction with the fact that, since the subscheme $X$
is ACM, it must be connected in codimension one (again, see \cite{hartshorne:connectedness} for
the definition and the result) and therefore $X_1$ and $X_2$, both of
dimension $m$ should meet along a Weil divisor.

By induction on the number of components of $X$, we deduce that we can
find two irreducible components $X_1$ and $X_2$ of $X$, of dimension
$m$, such that either both of them are of minimal degree in their linear
span or one of them, say $X_1$, is of minimal degree on its linear
span and $X_2$ is of quasi-minimal degree.

In both cases, the subscheme $X_1$, being of minimal degree, is ACM in its
linear span, while $X_2$ is either ACM or the image
of a (finite) projection of a variety $\bar X_2$ of minimal
degree (see \cite[Theorem 1.2]{brodmann-schenzel:arithmetic}). Since
$\bar X_2$ supports an ACM sheaf (see \S \ref{minimal}), so does $X_2$, because
taking the direct image via a finite map preserves the ACM property.
In all these circumstances, Theorem \ref{two-pieces} applies to show that $X_1 \cup X_1$
is CM-wild, and by Lemma \ref{one-piece}, $X$ is CM-wild as well.

\subsection{Reduction to surfaces}

In view of the previous discussion, we may assume from now on that $X$
is irreducible and of almost minimal degree.
In other words, $X$ has $\Delta$-genus one. This condition amounts to
asking that the sectional $p_X$ of $X$ is one,
see \cite[page 45]{Fu90}.
Essentially, these varieties are completely classified:
see \cite{fujita:delta-genus} for the case of normal varieties, and
\cite{reid:non-normal-del-pezzo,brodmann-schenzel:arithmetic} for the
non-normal case.
For surfaces of degree $3$ and $4$ we refer to
\cite{lee-park-schenzel:cubics,lee-park-schenzel:quadrics}.
For the roots of this classification,
originated from work of Schläfli and Cayley, see for instance
\cite{abhyankar:cubic, bruce-wall:surfaces}.

Since we are assuming that $X$ is ACM, it turns out that $X$ is arithmetically
Gorenstein (AG), which is to say, $R=\bk[X]$ is a graded Gorenstein ring  (see
\cite[Remark 4.5]{brodmann-schenzel:arithmetic}). Therefore the canonical
sheaf satisfies $\omega_X \simeq \cO_X(m-1)$: indeed, by
\cite[Corollary 4.1.5]{migliore:liaison}, $\omega_X$ must be of the
form $\cO_X(t)$ for some $t \in \ZZ$; on the other hand restricting to a generic
one-dimensional linear section $Y$ gives
 $\omega_Y\simeq \cO_Y(t+m-1)$ by adjunction because $p_Y=1$. Therefore
 $\chi(\omega_Y)=0$ so $t=1-m$.

Let us quickly rule out the case $m=1$. We have that $X$ is of tame CM-type if it
smooth (namely $X$ is an elliptic curve), by classical work of
Atiyah, see \cite{atiyah:elliptic}.
If $X$ is singular, we know by \cite{drozd-greuel} that $X$ is CM-tame
if $X$ a cycle of rational curves with ordinary double points, and that
$X$ is CM-wild otherwise.

The goal of this section is to deal with higher dimensions. Namely we
want to prove that, if $X$ is ACM of almost minimal degree, then $X$
is CM-wild as soon as $m \ge 2$.

The idea is to use our reduction to linear sections. Let $Y\subset \PP^d$ be a linear
section of $X$ with $\dim(Y)=2$. By Bertini's theorem, we may assume that $Y$ is also an
 irreducible ACM subscheme
  of almost minimal degree, with $\omega_Y \simeq \cO_Y(-1)$. Note that the codimension $c$ of $Y$ in
  $X$ is $m-2$ so $\omega_Y(m-c-1) \simeq \cO_Y$. Therefore,
 Theorem
\ref{embedding} applies to prove that $X$ is CM-wild, as soon as we show that $Y$
supports two simple Ulrich sheaves $\cA$ and $\cB$ such that:
\[
\Hom_Y(\cA,\cB)=0=\Hom_Y(\cB,\cA), \quad \mbox{and} \quad \dim_\bk
\Ext^1_Y(\cB,\cA) \ge 3.
\]
Finding $\cA$ and $\cB$ as above will be our task.  It is natural to
look for $\cA$ and $\cB$ among sheaves of low rank.
However, in general it will not be possible to obtain sheaves of rank one. Indeed, Ulrich
sheaves of rank one may not exist for certain del Pezzo surfaces,
for example cubic surfaces with an $E_6$ singularity (see \cite[Theorem 9.3.6]{dolgachev:classical-AG}).

Therefore we move forward to construct rank two sheaves using the  Hartshorne-Serre
correspondence. Of course this idea is not new, as for instance it is
widely used in \cite{casanellas-hartshorne:ACM-cubic} precisely to
construct families of Ulrich bundles on smooth cubic surfaces, a
special case of surfaces of almost minimal degree.
The construction can be performed in quite a general setup; for
instance, for cubic surfaces one knows, even if $\bk$ is not
algebraically closed, the degree of the field extension needed to
construct $\cE_Z$, see \cite{tanturri:pfaffian}.
However, in our setting we have to
be a bit more careful since the surfaces under consideration may be badly singular.

\subsection{Surface cones}

Let us quickly rule out the case of cones, namely assume that
$Y\subset \PP^{d}$ is a cone over an integral curve $Y^0 \subset
\PP^{d-1}$ with trivial canonical sheaf, the vertex of the cone being
a single point.
In this case, using \cite[Proposition 3.5]{altman-kleiman:compactifying},
we may choose non-isomorphic Ulrich line bundles $\cF^0$ and $\cE^0$ on the curve
$Y^0$, put $\cM = \cE^0 \otimes (\cF^0)^\vee$ and observe:
\[
\Ext^1_{Y^0}(\cE^0,\cF^0(-1))^* \simeq \HH^0(Y^0,\cM(1)) \simeq \bk^{d}.
\]
because $\cM(1)$ is a line bundle of degree $d$, and clearly $d \ge
3$.
We also have $\Ext^1_{Y^0}(\cE^0,\cF^0)^* \simeq \HH^0(Y^0,\cM)=0$
because $\cE^0$ and $\cF^0$ are not isomorphic. By the same reason we have
$\Hom_{Y^0}(\cE^0,\cF^0) = \Hom_{Y^0}(\cF^0,\cE^0) = 0$.

We use the approach of \S \ref{extensions-cones}.
The coordinate ring $R=\bk[X]$ takes the form $R^0 \otimes_\bk
\bk[\lambda]$, with $R^0=\bk[X^0]$ and the modules $E^0 =
\Gamma_*(\cE^0)$ and $F^0 = \Gamma_*(\cF^0)$ give rise, by tensoring
with $\bk[\lambda]$, to Ulrich $R$-modules $F$ and $E$ (as their
$S$-resolutions are still linear) and thus to Ulrich
sheaves $\cE$ and $\cF$ over $X$. By Lemma \ref{ext-i-tensor} and
Lemma \ref{Ext-Ext}, we have:
\[
\Ext^1_Y(\cE,\cF) \simeq \Ext^1_{Y^0}(\cE^0,\cF^0(-1)) \cdot \lambda,
\]
so this space has dimension at least $3$. The same argument shows
$\Hom_{Y}(\cE,\cF) = \Hom_{Y}(\cF,\cE) = 0$. Then $\cA=\cE$ and
$\cB=\cF$ are the desired  sheaves to apply  Theorem
\ref{embedding}.

\subsection{Hartshorne-Serre correspondence}

At this point we may assume that the surface $Y \subset \PP^d$ is not a
cone.  Let us consider a set $Z\subset Y \subset \PP^d$ of $d+2$
distinct
points points in general linear position and disjoint from $\Sing(Y)$. We have:
\[
\Ext^1_Y(\cI_{Z|Y}(2),\cO_Y) \simeq \Ext^1_Y(\cO_Y,\cI_{Z|Y}(1))^*
\simeq\HH^1(Y,\cI_{Z|Y}(1))^* \simeq \bk,
\]
 where we used that $\omega_Y\simeq\cO_Y(-1)$ and that $Z$ spans the
 whole $\PP^d$. A non-zero element
 $\lambda \in \HH^1(Y,\cI_{Z|Y}(1))^*$ provides a coherent sheaf
 $\cF_Z$ of rank $2$ that fits into the short exact sequence:
\begin{equation}\label{Serre}
0\to\cO_Y\to\cF_Z\to\cI_{Z|Y}(2)\to 0.
\end{equation}
Given a set of points $Z$ of $d+2$ points of $Y$, write $[Z]$ for the
corresponding element of the Hilbert scheme $\Hilb^{d+2}(Y)$ of
subschemes of length $d+2$ of $Y$.
In the next lines, stability is always with respect to $\cO_Y(1)$.

\begin{lem}
The sheaf $\cF_Z$ is Ulrich and locally free of rank $2$.
\end{lem}

\begin{proof}
Let us prove that $\cF_Z$ is locally free. We need only prove this
around any point $z$ of $Z$, as $\cI_{Z|Y}$ is already free of rank
$1$ away from $Z$.
First of all, taking the dual of the short exact sequence
\[
0\to\cI_{Z|Y}(2)\to\cO_Y(2)\to\cO_Z(2)\to 0,
\]
we deduce, since $Z$ is smooth and zero-dimensional, that
\[
\cExt^1_{\cO_Y}(\cI_{Z|Y}(2),\cO_Y)\simeq\cExt^2_{\cO_Y}(\cO_Z,\cO_Y)\simeq\omega_Z.
\]

Next, note that  $\cHom_{\cO_Y}(\cI_{Z|Y}(2),\cO_Y)) \simeq
\cO_Y(-2)$. In view of the vanishing
$\HH^1(Y,\cO_Y(-2))=0$, by
 the local-to-global spectral
sequence we get an exact sequence:
\[
0 \to \Ext^1_Y(\cI_{Z|Y}(2),\cO_Y) \to \HH^0(Y,\cExt^1_{\cO_Y}(\cI_{Z|Y}(2),\cO_Y))
\to \HH^2(Y,\cO_Y(-2)) \to 0.
\]
Using Serre duality and the above isomorphisms we rewrite this as
\[
0 \to \Ext^1_Y(\cI_{Z|Y}(2),\cO_Y) \to \HH^0(Y,\omega_Z)^*
\to \HH^0(Y,\cO_Y(1))^* \to 0.
\]
We may choose coordinates so that $Z$ is the union of $d+2$ points of
a projective coordinate system, so that $\lambda$ is the vector
$(1,\ldots,1,-1)$ in $\HH^0(Y,\omega_Z)^*$, which shows that $\lambda$
is non-zero at any point of $Z$.

Therefore, $\lambda$ corresponds to a global section:
\[
 \cO_Y \to \cExt^1_{\cO_Y}(\cI_{Z|Y}(2),\cO_Y) \simeq \omega_Z,
\]
which is non-zero at any point $z$ of $Z$. Since $z$ is locally defined by
two equations, the sheaf $\cExt^1_{\cO_Y}(\cI_{z|Y}(2),\cO_Y)$ is
one-dimensional at $z$, generated by the extension given by the Koszul complex of these
equations; so the middle term of such extension is $\cF_Z$. Hence
$\cF_Z$ is locally free around any point $z$ of $Z$.
\medskip

The fact that the sheaf $\cF_Z$ is Ulrich follows from \cite[Proposition 2.1]{eisenbud-schreyer-weyman}.
Indeed, we need to prove that
$\HH^i(Y,\cF_Z(-1))=\HH^i(Y,\cF_Z(-2))=0$ for all $i$. Since $\cF_Z$ is
locally free of rank $2$ and clearly $\wedge^2 \cF_Z \simeq \cO_Y(2)$,
we have:
\[
\HH^{2-i}(Y,\cF_Z(-2))^* \simeq \HH^i(Y,\cF_Z^\vee(2) \otimes \omega_Y) \simeq \HH^i(Y,\cF_Z(-1)),
\]
so it will be enough to prove one set of vanishing conditions. This
amounts to checking that the map $\HH^1(Y,\cI_{Z|Y}(1)) \to
\HH^2(Y,\cO_Y(-1))$ is an isomorphism. We observe that this map is Serre-dual of the map $\HH^0(Y,\cO_Y) \to \Ext^1_Y(\cI_{Z|Y}(2),\cO_Y)$ that sends the
identity to $\lambda$, and therefore it is an isomorphism.

Otherwise, one may deduce that $\cF_Z$ is Ulrich by the form of the minimal
graded free resolution of the ideal of $Z$, which can be extracted from
 \cite{miro-roig-pons-llopis:del-pezzo}.
\end{proof}

\begin{lem} \label{ulrich semistable}
  Let $\cF$ be an Ulrich sheaf on an $m$-dimensional closed subscheme
  $X \subset \PP^n$. Then $\cF$ is semistable and any destabilizing
  subsheaf of $\cF$ is Ulrich.
\end{lem}

\begin{proof}
  This follows again from \cite[Proposition
  2.1]{eisenbud-schreyer-weyman}.
  Indeed, first note that, since $\cF$ is Ulrich, it is also locally
  Cohen-Macaulay and therefore pure.
  Next, choosing a finite linear projection
  $\pi : X \to \PP^m$, we have $\pi_*(\cF) \simeq \cO_{\PP^m}^u$ for
  some integer $u$. Put $\chi_m=\pp(\cO_{\PP^m})$ and $d=\deg(X)$.

  Suppose that  $\cF'$ is a proper subsheaf of $\cF$   with $\pp(\cF') \succ \pp(\cF)$.
  Since $\pi$ is finite, we have:
  \[
  P(\cF',t)\frac{\rk(\cF)}{u \rk(\cF')} \succ \chi_m(t),
  \]
  and, because $\cO_{\PP^m}^u$ is semistable,
  $P(\cF',t)/\rk(\pi_*(\cF')) \preceq \chi_m(t)$.
  But this contradicts the equality
  \[
  d = \frac{\rk(\pi_*(\cF'))}{\rk(\cF')}=\frac{u}{\rk(\cF)}.
  \]
  This shows that $\cF$ is semistable. Moreover, if $\pp(\cF') =
  \pp(\cF)$, then $\pp(\pi_*(\cF'))=\chi_m(t)$ so $\pi_*(\cF') \simeq
  \cO_{\PP^m}^{u'}$ for some integer $u'$ because $\cO_{\PP^m}^u$ is polystable, which implies that $\cF'$ is Ulrich.
\end{proof}

\begin{prop}
  The set $Z$ can be chosen so that $\cF_Z$ is stable.
\end{prop}

\begin{proof}
We know that $\cF_Z$ is Ulrich so either $\cF_Z$ is stable or there
exist $\cA$ and $\cB$ Ulrich sheaves of rank $1$ such that $\cF_Z$
fits into:
\[
0\to\cA\to\cF_Z\to\cB\to 0.
\]

Note that $\cA$ is reflexive as $\cB$ is torsion-free and $\cF_Z$ is
locally free. Therefore, a global section of $\cA$ vanishes along a
subscheme of $Y$ which is Cohen-Macaulay of dimension one (see \cite[Proposition~2.8 and~2.9]{hartshorne-k-theory}).

By construction, the global section $s\in\HH^0(Y,\cF_Z)$ associated
with $Z$ vanishes precisely on $Z$. Therefore $s$ cannot lie in
$\HH^0(Y,\cA) \subset \HH^0(Y,\cF_Z)$, as $s$ would then vanish
in codimension $1$. Hence we can construct the following commutative diagram:
\[
\xymatrix@-2ex{
   &   &  \cO_Y \ar@{=}[r] \ar[d]^s & \cO_Y \ar[d] & \\
  0 \ar[r] & \cA \ar[r] \ar@{=}[d] &  \cF_Z \ar[r] \ar[d] & \cB \ar[r]\ar[d] & 0\\
  0 \ar[r] & \cA \ar[r] & \cI_{Z|Y}(2) \ar[r]   & \cT \ar[r] & 0
}
\]
where $\cT$ is a torsion sheaf defined by the diagram.
This tells us that:
\[\
\HH^0(Y,\cI_{Z| Y} \ts \cA^\vee(2)) \ne 0,
\]
namely $Z$ lies on a divisor $D$ from the linear system
$|\cA^{\vee}(2)|$. Our goal is to prove that $Z$ can be chosen so that
it lies on no such divisor. Notice that, by \cite[Lemma 2.4]{casanellas-hartshorne-geiss-schreyer}, $\cA^\vee(2)$ is also an Ulrich sheaf of rank one.

\medskip

Assume first that $Y$ is normal. For each fixed Ulrich sheaf $\cL$ of
rank one, we know that $\dim_{\bk} \HH^0(Y,\cL)=d$, and each
non-zero global section of $\cL$ vanishes along a Weil divisor $D
\subset Y$. We view isomorphism classes of such sheaves as
elements of the divisor class group of $Y$, which  is
identified with the group $\APic(Y)$ of generalized divisors on $Y$,
see \cite[\S 2]{hartshorne:generalized-divisors}.

Each linear system $|\cL|$ has dimension $d-1$. For each $D$ in $|\cL|$, taking all
non-degenerate smooth subschemes $Z \subset Y$ lying in $D$ we obtain
a non-empty open subset of the
Hilbert scheme $\Hilb_{d+2}(D)$ which is $(d+2)$-dimensional. The union of these such $Z$ for all
choices of $D$ in $|\cL|$ forms a subscheme of $\Hilb_{d+2}(Y)$ which
is of dimension at most $(d+2)+\dim |\cL|=2d+1$. But the main
component of $\Hilb_{d+2}(Y)$ (that is, the component containing smooth
subschemes) has dimension $2d+4$, so we may choose
$Z$ not lying in any $D \in |\cL|$.
Finally, since the divisor class group of a normal del Pezzo surface
is discrete, we may choose $Z$ away from the union, over all divisor
classes arising from Ulrich sheaves $\cL$, of the subschemes of
$\Hilb_{d+2}(Y)$ associated with $D$ lying in $|\cL|$.
So $\cF_Z$ is stable if $Z$ is general enough.

\medskip
Now let us assume that $Y$ is not normal.
We know by \cite{reid:non-normal-del-pezzo} that $Y$ is normalized by
a surface $\bar Y \subset \PP^{d+1}$ of minimal degree $d$, the
normalization map $\bar Y \to
Y$ being induced by a projection $\PP^{d+1} \to \PP^d$.
The normalization is an isomorphism away from a conic in $\bar Y$ which is mapped onto the
singular locus of $Y$, which in turn is a line $L$. Moreover, the
surface $\bar Y$ is smooth as otherwise, being of minimal degree, it would have to be a cone, but
then $Y$ would be a cone too, which we excluded.

Given an Ulrich sheaf $\cL$ of
rank one, again we have $\dim_{\bk} \HH^0(Y,\cL)=d$, and we choose a
non-zero global section of $\cL$. This vanishes along a Weil divisor $D
\subset Y$ of degree $d$, which contains a structure of multiplicity
$e \le d$ over $L$. Removing this structure from $D$ we obtain an effective
generalized divisor $D_0$, whose class lies in $\APic(Y)$, see
\cite[Proposition 2.12]{hartshorne:generalized-divisors}.
Since $Z$ is disjoint from $L$, it will be enough to prove that we may
choose $Z$ away from any divisor $D_0$ of degree $d-e$, and obviously
it suffices to show that this holds for $D_0$ of degree $d$.

To do this we use the explicit description of $\APic(\bar Y)$ given by
\cite[Theorem 4.1 and Proposition 4.2]{hartshorne-polini:divisor}.
Indeed, $\bar Y$ is either a Veronese surface in $\PP^5$ (and thus $d=4$) or a rational normal scroll
of degree $d \ge 3$, and $D_0$ is the image of an effective divisor
$D_0'$ of degree $d$ in $\bar Y$. Also, taking a hyperplane section $C
\simeq \PP^1$ of $\bar Y$, we get:
\[
0 \to \cO_{\bar Y}(D'-H) \to \cO_{\bar Y}(D') \to
\cO_C(D') \to 0,
\]
so $\dim_\bk \HH^0(\bar
Y,\cO_{\bar Y}(D')) \le d+2$, as $\deg(D'-H)\le 0$  and
$\cO_C(D') \simeq \cO_{\PP^1}(\deg(D'))$, with $\deg(D')
\le d$.

Therefore, since there are finitely many effective divisor classes of
degree at most $d$ in $\bar Y$, if we choose $Z' \subset \bar Y$ to be a
non-degenerate set of $d+2$ distinct points lying away from all
divisors $D'$ in those classes, the image $Z$ of $Z'$ in $Y$ will
be contained in no generalized divisor $D_0$ of degree at most $d$. We
conclude that $\cF_Z$ is stable.
\end{proof}

\begin{lem} 
We may choose $Z$ and $Z'$ sets of $d+2$ points of $Y$ such that $\cE=\cF_Z$,
$\cF=\cF_{Z'}$ are non-isomorphic stable Ulrich bundles of rank $2$ on
$Y$. In this case:
\begin{align*}
&\Hom_Y(\cE,\cF) = \Hom_Y(\cF,\cE)= 0, \\
&\dim_{\bk} \Ext^1_Y(\cE,\cF)=4.
\end{align*}

\end{lem}

\begin{proof}
We have proved so far that, for $Z$ general enough in the main
component of $\Hilb_{d+2}(Y)$, the sheaf $\cF_Z$ is a stable locally
free Ulrich sheaf of rank $2$. Given such $Z$, choosing a non-zero
global section of $\cF_Z$ gives a map from an open dense subset of
$\PP(\HH^0(Y,\cF_Z)) \simeq \PP^{2d-1}$ to the main component of
$\Hilb_{d+2}(Y)$ associating with the section its vanishing locus.
This map cannot be surjective by dimension reasons, so we can take $Z'$
general enough, lying away from the image of this map and such that $\cF_{Z'}$
is also a stable locally free Ulrich sheaf of rank $2$. Because $Z'$
is not the vanishing locus of a global section of $\cF_Z$, we have
that $\cF_Z$ and $\cF_{Z'}$ are not isomorphic.

The first two statements concerning morphisms are clear since  $\cE$
and $\cF$ are stable with the same slope and not isomorphic. For the
last statement, since $\cE$ is locally free, we have
$\Ext^1_Y(\cE,\cF) \simeq \HH^1(Y,\cE^{\vee}\otimes\cF)$.
Tensoring the short exact sequence \ref{Serre} by $\cE^{\vee}$ and
considering the associated long exact sequence of global sections,
since $\HH^1(Y,\cE^{\vee})=\HH^2(Y,\cE^{\vee})=0$, we
get an isomorphism
\begin{equation}
  \label{isoext}
\Ext^1_Y(\cE,\cF)  \simeq \HH^1(Y,\cE^{\vee}\otimes\cI_{Z|Y}(2)).
\end{equation}

On the other hand, the exact sequence defining $Z\subset Y$
twisted by $\cO_Y(2)$ reads:
\begin{equation}
  \label{OZ}
0\to\cI_{Z|Y}(2)\to\cO_Y(2)\to\cO_Z(2)\to 0.
\end{equation}
Taking into account that $\cE^{\vee}\simeq\cE(-2)$ we obtain
$\HH^0(Y,\cE^{\vee}\otimes\cI_{Z|Y}(2))=0$.
Then, tensoring \eqref{OZ} by $\cE^{\vee}$, taking global sections and
combining with \eqref{isoext} we get:
$$
0 \to
\HH^0(Y,\cE^{\vee}(2))\to\HH^0(Y,\cE^{\vee}\otimes\cO_Z(2))\to \Ext^1_Y(\cE,\cF)
\to 0.
$$
Now we know that $\cE^{\vee}(2) \simeq \cE$ has $2d$ independent
global sections, as $\cE$ is Ulrich. On the other hand, since
$Z$ has length $d+2$, so that $\cE^{\vee}\otimes\cO_Z(2)$ is just a
vector space of rank $2$ concentrated at $d+2$ points, hence
$\dim_{\bk}\HH^0(Y,\cE^{\vee}\otimes\cO_Z(2))=2d+4$. We conclude that $\dim_{\bk} \Ext^1_Y(\cE,\cF)=4$.
\end{proof}

Theorem \ref{embedding} combined with  the results of this section
yields the proof of Theorem \ref{resumealmost}.

\section{Varieties of higher degree}
\label{higher degree}

In this final section we prove our main theorem for subschemes of degree higher than almost
minimal, that is, in the range $\Delta(X) > 1$.
Let again $X \subset \PP^n$ be a reduced ACM subscheme of dimension $m
\ge 1$ and degree $d > n-m+2$, or in other words $\Delta(X)>1$.
Thus $p \ge 2$ (see \cite[(6.4.5)]{Fu90}).

We take a linear section $Y$ of dimension $1$, which we may assume to
be reduced, so $Y$ is an ACM curve of arithmetic genus $p \ge 2$.
We note that the proof of \cite[Proposition 3.5]{altman-kleiman:compactifying}
applies to $Y$ as it only uses the fact that the projective curve $Y$
is reduced and connected.
Then, we may find a line bundle $\cL_1$ on $Y$ satisfying:
\[
\HH^0(Y,\cL_1)=\HH^1(Y,\cL_1)=0.
\]
Therefore, $\cL_1(1)$ is an Ulrich line bundle by \cite[Theorem 4.3]{eisenbud-schreyer-weyman}
Clearly, $\Hom_Y(\cL_1,\cL_1) \simeq \HH^0(Y,\cO_Y) \simeq \bk$
because $\cL_1$ is invertible, so $\cL_1$ is simple.
Moreover the space $\Ext^1_Y(\cL_1,\cL_1) \simeq \HH^1(Y,\cO_Y)$ has
dimension $p \ge 2$ and $\Ext^2_Y(\cL_1,\cL_1)=0$.
So we may take general flat deformations of $\cL_1$ to get
sheaves $\cL_2$, $\cL_3$, $\cL_4$, not isomorphic to one another nor to $\cL_1$, which will also be
invertible (hence simple) and satisfy
$\HH^0(Y,\cL_i)=\HH^1(Y,\cL_i)=0$ for all $i$ by semicontinuity.

We claim that we may assume $\Hom_Y(\cL_i,\cL_j)=0$ for $i \neq
j$.
Indeed, first note that the degree of the line bundle $\cL_1$ on each
irreducible component of $Y$ is
constant along small deformations so we may assume that all the sheaves $\cL_i$ have the
same degree along each component.
Put $\cM = \cL_i^\vee \otimes \cL_j$, take a non-zero morphism $\cL_i
\to \cL_j$ and rewrite it as a nonzero global section $\varphi :
\cO_Y \to \cM$. The restriction of $\cM$ to any irreducible component
of $Y$ is a line bundle of degree 0, so $\varphi$ is an
isomorphism as soon as its restriction to all such components is
non-zero. Set $Y'$ for the union in $Y$ of the irreducible components of $Y$
where $\varphi$ is non-zero (and hence an isomorphism) and put $Y''$
for the closure in $Y$ of $Y\setminus Y'$, $\cM''= \cM|_{Y''}$.
By assumption $Y' \ne \emptyset \ne Y''$. Then we have the commutative
exact diagram:
\[
\xymatrix@-2ex{
0 \ar[r] & \cI_{Y' | Y} \ar[r] \ar^-{\varphi'}[d] & \cO_Y \ar^-\varphi[d] \ar[r] & \cO_{Y''} \ar[r] \ar^-{\varphi''}[d]& 0 \\
0 \ar[r] & \cI_{Y' | Y} \otimes \cM \ar[r] & \cM \ar[r] & \cM'' \ar[r] & 0
}
\]
Here, $\varphi'' = \varphi|_{\cO_Y''}$ is zero, while $\varphi'$ is
the restriction of $\varphi$ to $Y'$, tensored with the identity over
$\cI_{Y' | Y}$, and therefore is an isomorphism. Hence the snake lemma gives a splitting $\cO_{Y''} \to
\cO_Y$ of the surjection $\cO_Y \to \cO_{Y''}$, so $Y$ cannot be
connected unless $Y'$ or $Y''$ are empty, a contradiction.

As a consequence, we get that the space $\Ext^1(\cL_i,\cL_j) \simeq \HH^1(Y,\cL_i^\vee \otimes
\cL_j)$ has dimension $p-1$ for $i \neq j$.
Now we may choose
$\cA$ and $\cB$ as two sheaves given by non-trivial extensions:
\begin{align*}
&  0 \to \cL_1(1) \to \cA \to \cL_2(1) \to 0, \\
&  0 \to \cL_3(1) \to \cB \to \cL_4(1) \to 0.
\end{align*}
It is clear that $\cA$ and $\cB$ are locally free Ulrich sheaves of
rank $2$. Also, the sheaves $\cA$ and $\cB$ are simple and satisfy (see for instance \cite[Proposition 5.1.3]{pons_llopis-tonini}:
\begin{align*}
&  \Hom_Y(\cA,\cB)=\Hom_Y(\cB,\cA)=0. \\
&  \chi(\cA,\cB)=\chi(\cB,\cA)=4(1-p).
\end{align*}
We obtain the following:
\[
\dim_{\bk} \Ext^1_Y(\cA,\cB)=\dim_{\bk} \Ext^1_Y(\cB,\cA)=4(p-1) \ge 4.
\]
Note that the non-vanishing condition of Theorem \ref{stable FF}
reduces to $\HH^0(Y,\omega_Y) \ne 0$, which is true because
$p \ge 2$.
Now Theorem \ref{embedding} implies that $X$ is of wild CM representation type.


\end{document}